\documentclass[11pt]{amsart}
\usepackage[margin=1.2in]{geometry}
\usepackage[utf8]{inputenc}
\usepackage{amsmath,amssymb,amsthm}
\usepackage{mathtools,mathdots,nicefrac}
\usepackage[normalem]{ulem}
\usepackage[mathscr]{euscript}
\usepackage{xcolor}
\usepackage{bbm}
\usepackage[titletoc]{appendix}
\usepackage{scrextend}
\usepackage[normalem]{ulem}
\usepackage{refcount}
\usepackage{needspace}

\title{A statistical investigation of a divisor-sum function}

\author{Ivan Aidun}
\address{Department of Mathematics\\ University of Wisconsin \\ Madison, WI 53706}
\email{aidun@wisc.edu}

\author{Lola Thompson}
\address{Mathematical Institute\\ Universiteit Utrecht\\ 3584 CD Utrecht, The Netherlands}
\email{l.thompson@uu.nl}

\newcommand{\N}{\mathbb{N}}
\newcommand{\R}{\mathbb{R}}
\newcommand{\varep}{\varepsilon}

\newcommand{\den}{\mathrm{\mathbf{d}}}
\newcommand{\upden}{\overline{\den}}
\newcommand{\lowden}{\underline{\den}}
\newcommand{\fracpar}[2]{\paren{\frac{#1}{#2}}}

\DeclareMathOperator{\id}{id}

\DeclarePairedDelimiter\floor{\lfloor}{\rfloor}
\DeclarePairedDelimiter\abs{\lvert}{\rvert}
\DeclarePairedDelimiter\paren{(}{)}
\DeclarePairedDelimiter\set{\{}{\}}

\theoremstyle{definition}
\newtheorem{theorem}{Theorem}[section]
\newtheorem{definition}[theorem]{Definition}
\newtheorem{cor}[theorem]{Corollary}

\newtheorem{lemma}[theorem]{Lemma}

\makeatletter
\newtheorem*{rep@theorem}{\rep@title}
\newcommand{\newreptheorem}[2]{%
\newenvironment{rep#1}[1]{%
 \def\rep@title{#2 \ref{##1}}%
 \begin{rep@theorem}}%
 {\end{rep@theorem}}}
\makeatother
\newreptheorem{theorem}{Theorem}

\makeatletter
\let\oldabs\abs
\def\abs{\@ifstar{\oldabs}{\oldabs*}}
\let\oldparen\paren
\def\paren{\@ifstar{\oldparen}{\oldparen*}}
\let\oldfloor\floor
\def\floor{\@ifstar{\oldfloor}{\oldfloor*}}
\makeatother

\begin{document}

\begin{abstract} The sum of proper divisors function $s(n)$ has been studied for more than 2000 years. In this paper we study statistical properties of the related function $S_s(n) \coloneqq \sum_{d \mid n} s(d)$. This function arises from a generalization of the practical numbers. Although $S_s(n)$ is neither additive nor multiplicative, we prove that $S_s(n)/n$ has a continuous asymptotic distribution function, and that its values are dense in the interval $[0,\infty)$. We evaluate its mean and establish the existence of all higher moments. Moreover, if $\mu_k$ denotes the $k^{th}$ moment of $S_s(n)/n$, we show that $$\mu_k^{1/k} \sim e^{2\gamma} (\log k)^2$$ as $k \rightarrow \infty$, where $\gamma$ is the Euler-Mascheroni constant. \end{abstract}

\subjclass[2020]{Primary 11N64; Secondary 11N60, 11N37} 
\keywords{asymptotic distribution function, nonmultiplicative arithmetic functions, divisor sums, moments of arithmetic functions} 

\maketitle

\section{Introduction}\label{sec:intro}

When faced with an erratic function, it is natural to want to understand its behavior: what are its extreme values? How does it behave on average? How does it behave ``typically''? These types of questions have been studied for a panoply of arithmetic functions since the early part of the $20^{th}$ century. Going a step further, one can regard an arithmetic function $f$ as a random variable on the discrete interval of integers in $[1,N]$, endowed with the uniform distribution, and apply tools from probability theory in order to study these functions. 

Among the basic questions that one might ask about $f$ are whether it possesses an asymptotic distribution, how its values are distributed throughout its range, and how its moments grow. For additive and multiplicative functions, a powerful collection of classical tools can be used to address these questions. However, when an arithmetic function does not possess either structure, the situation is far less clear. In this paper we study a natural example lying just outside the multiplicative setting and show that, despite its nonmultiplicativity, its statistical behavior can be described in considerable detail. We call this new function $S_s(n)$, and define it as follows:

\[S_s(n) \coloneqq \sum_{d \mid n} s(d),\]  where $s(d)$ is the sum of proper divisors of a positive integer $d$. Although $S_s(n)$ is not multiplicative, the identity $$S_s(n) = S_\sigma(n) - \sigma(n)$$ expresses it as a difference of two multiplicative functions, where $S_\sigma = \sum_{d \mid n} \sigma(d).$ As a result, $S_s$ lies close enough to the multiplicative setting to retain considerable structure, while remaining outside the scope of many classical tools. In addition to providing an interesting test case for applying probabilistic methods in a non-multiplicative setting, this function also has connections to the practical numbers. In what follows, we provide background on $s(n)$ and the practical numbers, motivating the study of $S_s(n).$


\subsection{The function $s(n)$}

Let $s(n)$ denote the sum over all positive proper divisors of $n$, i.e.,
\[s(n) = \sum_{\substack{d \mid n\\ d < n}} d.\]
We may write $s(n) = \sigma(n) - n$, where $\sigma(n)$ is the usual sum-of-divisors function. Note that $s(n)$ is neither additive nor multiplicative.

The function $s(n)$ has an ancient history, going back to the Pythagoreans and their study of deficient, abundant, and perfect numbers. Pomerance \cite{Pom} goes so far as to refer to $s(n)$ as ``the first function''. Despite being studied for over $2000$ years, surprisingly little is known about $s(n)$ today. Davenport's \cite{Dav} classical work on the densities of deficient and abundant numbers provides an early example of the statistical study of this function.



\subsection{Motivation from $f$-practical numbers}

The \emph{practical numbers}, introduced by Srinivasan in \cite{Sri}, are positive integers $n$ such that every number between 1 and $n$ can be represented as a sum of distinct divisors of $n$. There is a long history of studying the distribution of the practical numbers, culminating in a proof by Weingartner \cite{Wein} that their counting function $P(x)$ satisfies $P(x) \sim cx/\log x$ for some positive constant $c$. An analogous notion, the $\varphi$-practical numbers, arises by replacing the divisors $d$ by the values $\varphi(d)$. These were studied extensively by the second author (see, for example, \cite{Tho} where they were first defined and \cite{PTW} where an asymptotic for the counting function was proven).

Motivated by the studies of practical and $
\varphi$-practical numbers, Schwab and the second author \cite{ST} generalized this construction to $f$-practical numbers for positive-integer-valued arithmetic functions $f$: a number $n$ is $f$-practical if every integer between $1$ and $$S_f(n) = \sum_{d \mid n} f(d)$$ can be written as a sum of $f(d)$'s, for distinct divisors $d$ of $n$. Notice that $S_f(n)$ is the largest number that could be written as a sum of $f(d)$ where the values of $d$ are distinct, so it is the natural upper bound for the interval where we can expect this property to hold. The original practical numbers and the $\varphi$-practical numbers correspond to the $f$-practical numbers for $f = \id$ and $f = \varphi$, respectively. 

\subsection{ Main results}



In the spirit of the classical work of Davenport \cite{Dav} on $n/\sigma(n)$ and Schoenberg \cite{Sch28,Sch36} on $\varphi(n)/n$, it is natural to consider whether the function $S_s(n)/n$ possesses a distribution function. We prove the following result in \S \ref{sec:cdf}.
\begin{reptheorem}{thm:cdf}
The function $S_s(n)/n$ has a continuous asymptotic distribution function.
\end{reptheorem}

Schoenberg \cite{Sch28} also proved that the function $\varphi(n)/n$ has image dense in the interval $[0,1]$. Analogous to this result, we prove the following.

\begin{reptheorem}{thm:densevalues}
The values of $S_s(n)/n$ are dense in the interval $[0,\infty)$.
\end{reptheorem}

We also compute mean values for $S_s(n)$ and $S_s(n)/n$, establish the existence of all moments, and determine their asymptotic growth. In particular, we prove:

\begin{reptheorem}{thm:moment est}
The moments $\mu_k$ exist and are finite.
\end{reptheorem}

Moreover, we determine the asymptotic growth of the moments.

\begin{reptheorem}{thm:momentasymp}
The moments $\mu_k$ satisfy $$\mu_k^{1/k} \sim e^{2\gamma} (\log k)^2$$ as $k \rightarrow \infty$, where $\gamma$ is the Euler-Mascheroni constant.
\end{reptheorem}

Taken together, these results show that $S_s(n)/n$, although nonmultiplicative, still exhibits a great deal of the statistical behavior that is familiar from classical multiplicative functions. However, the proofs exploit multiplicative structure in three rather different ways. The classical analytic approach cannot be directly applied to prove that $S_s(n)/n$ has a continuous distribution function, due to the fact that the distribution function of $\log \sigma(n)/n$ is purely singular. Instead, our distribution theorem exploits the representation $S_s = S_\sigma - \sigma$ together with a modern criterion of Lebowitz-Lockard and Pollack for linear combinations of multiplicative functions. The density theorem instead uses a recurrence for $S_s(Nq)$, while the moment asymptotic connects the large values of $S_s(n)/n$ with the classical function $n/\varphi(n).$



\setcounter{section}{1}
\section{Tools from probabilistic number theory}\label{sec:tools}

In this section, we introduce the definitions and tools from probabilistic number theory that will be used in our proofs in Sections \ref{sec:dense}, \ref{sec:cdf}, \ref{sec:mvm}, and \ref{sec:momentasymptotic}.

\subsection{ Definitions and Notation}

A central concept in probabilistic number theory is that of asymptotic density, which is a formalization of the intuitive notion of the probability that an integer belongs to a set.

\begin{definition}
We define the \emph{asymptotic density} (also called the \emph{natural density} or simply \emph{density}) of a subset $A \subset \N$ to be
\[\den A = \lim_{N \to \infty} \frac{\#\set{a \in A\colon a \leq N}}{N},\]
when the limit exists. Replacing the limit by $\limsup$ (resp.\ $\liminf$) yields the \emph{upper density} (resp.\ \emph{lower density}), which we denote $\upden$ (resp.\ $\lowden$).
\end{definition}

The asymptotic density can be seen as a limit of the probabilities $\mathbb{P}(n \in A)$ where $n$ is restricted to the interval $[1,N]$. As such, asymptotic density preserves many nice properties of usual probabilities, but it does not form a measure on $\N$. In particular, the sets possessing an asymptotic density are not closed under countable union.

In classical probability theory, given a real random variable $X$ following some distribution, the distribution function $F$ associated to that distribution is $F(x) = \mathbb{P}(X \leq x)$. Any function arising this way will be non-decreasing and right-continuous (i.e., $\lim_{x\to x_0^+} F(x) = F(x_0)$). Moreover, such a function will satisfy $\lim_{x \to -\infty} F(x) = 0$ and $\lim_{x \to \infty} F(x) = 1$. We use these properties to define a general distribution function.

\begin{definition}
A non-decreasing function $F$ is a distribution function (d.f.) if $F$ is right-continuous and satisfies $\lim_{x \to -\infty} F(x) = 0$ and $\lim_{x \to \infty} F(x) = 1$.
\end{definition}

For our purposes, a ``random variable'' will be an arithmetic function $f$. If the function $f$ is well-behaved, then the function which appears will be a true distribution function according to the above definition.

\begin{definition}
Given an arithmetic function $f$, we define the sequence of functions
\[F_N(x) = \frac{\#\set{n \leq N \colon f(n) \leq x}}{N}.\]
We say $f$ has asymptotic distribution function (a.d.f.) $F$ if the functions $F_N$ converge pointwise to a function $F$, and if $F$ is a distribution function.
\end{definition} 

We note that if $f$ has an a.d.f.\ $F$, then $F(x) = \den\set{n: f(n)\leq x}$.

\begin{definition}
For an arithmetic function $f$, we define the \emph{mean value of $f$ over $n \leq x$}, for $x$ some positive real number, to be
\[M_x(f) = \frac{1}{x} \sum_{n \leq x} f(n).\]
Furthermore, we define the \emph{mean value of $f$} to be $M(f) = \displaystyle\lim_{x\to \infty} M_x(f)$ when the limit exists.

Similarly, the \emph{$k$th moment} of an arithmetic function $f$ is defined to be
\[\lim_{x \to \infty} \frac{1}{x} \sum_{n \leq x} f(n)^k,\]
when the limit exists.
\end{definition}

\subsection{Theorem of Erd\H{o}s-Wintner}

Because of the utility of a.d.f.s, a rich theory has been established on the subject of when certain arithmetic functions have an a.d.f.\ A powerful theorem in this vein is the Erd\H{o}s-Wintner Theorem \cite[p.475]{ten15}, which completely answers the question of the existence of an a.d.f.\ in the case of additive arithmetic functions.

\begin{theorem}[Erd\H{o}s-Wintner, 1939] \label{thm:erdoswintner}
Fix any real number $R > 0$. A real additive function $f(n)$ has a limiting distribution if and only if the following three series converge simultaneously:

\begin{center}
\begin{tabular}{ccc}
    \hspace{8mm} (i) $\displaystyle{\sum_{\abs{f(p)} > R} \frac{1}{p}}$; \hspace{8mm} & \hspace{8mm} (ii) $\displaystyle{\sum_{\abs{f(p)} \leq R} \frac{f(p)^2}{p}}$; \hspace{8mm} & \hspace{8mm} (iii) $\displaystyle{\sum_{\abs{f(p)} \leq R} \frac{f(p)}{p}}$. \hspace{8mm}\\
\end{tabular}
\end{center}

In this case, all three sums converge for all $R > 0$. The limiting d.f.\ is either absolutely continuous, purely singular, or discrete. It is continuous if and only if
\[\sum_{f(p) \neq 0} \frac{1}{p} = \infty.\]
\end{theorem}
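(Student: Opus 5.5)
This is the classical theorem of Erd\H{o}s and Wintner, which the paper quotes from \cite[p.475]{ten15}. I would follow the standard probabilistic route: model $f(n)$ by a sum of independent random variables, one per prime. For each prime $p$, let $X_p$ take the value $f(p^\nu)$ with probability $p^{-\nu}(1-1/p)$, for $\nu\ge 0$ (with $f(1)=0$). The model for $f$ is then $\sum_p X_p$. By Kolmogorov's three-series theorem, this sum converges almost surely exactly when the three series (i)--(iii) converge. The higher prime powers contribute only $O(\sum_p p^{-2})$ to every relevant quantity, which is why only the values $f(p)$ appear in the criterion.

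\textbf{Sufficiency.} Fix $y$ and set $f_y(n)=\sum_{p^\nu\| n,\,p\le y} f(p^\nu)$. This function depends only on the $p$-parts of $n$ for $p\le y$. Hence each set $\{n: f_y(n)\le x\}$ is, up to sets of arbitrarily small density, a finite union of residue classes. It follows that $f_y$ has an a.d.f.\ $F_y$, namely the law of $\sum_{p\le y}X_p$.

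Next I would show that $f-f_y$ is small in density, uniformly in $N$ as $y\to\infty$. Write
\[f=f^{(1)}+f^{(2)},\]
where $f^{(1)}$ keeps the values $f(p^\nu)$ with $|f(p^\nu)|\le R$, and $f^{(2)}$ keeps the rest.
\begin{itemize}
\item The part $f^{(2)}-f^{(2)}_y$ is nonzero only on $n$ divisible by some $p>y$ with $|f(p)|>R$, or by some $p^2$ with $p>y$. This set has upper density at most $\sum_{p>y,\,|f(p)|>R}1/p+\sum_{p>y}p^{-2}\to0$, using series (i).
\item For $f^{(1)}-f^{(1)}_y$, I would apply the Tur\'an--Kubilius inequality, in the form that allows primes up to $N$. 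This bounds the mean square deviation of $f^{(1)}-f^{(1)}_y$ from $\sum_{y<p\le N,\,|f(p)|\le R}f(p)/p$ by $O\bigl(\sum_{p>y,\,|f(p)|\le R}f(p)^2/p\bigr)$, which tends to $0$ by series (ii). Series (iii) makes the centring constant converge.
\end{itemize}
Then the characteristic functions $\varphi_N(t)=\frac1N\sum_{n\le N}e^{itf(n)}$ are uniformly close to those of $f_y$. The product $\prod_p \mathbb{E}e^{itX_p}$ converges to a continuous function at $t=0$, so L\'evy's continuity theorem gives the a.d.f.

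\textbf{Necessity.} This is the step I expect to be the main obstacle. Suppose an a.d.f.\ exists. I would first use a decoupling argument to pass from the arithmetic to independent variables. Compare $f(n)$ with $f(n)-f(m)$ for an independent copy $m$, or restrict to $n$ in a residue class modulo $\prod_{p\le y}p^{k}$. This shows that the symmetrised model sums $\sum_{p\le y}(X_p-X_p')$ are tight, uniformly in $y$. A Kolmogorov-type inequality, or the fact that a symmetric series of independent terms with tight partial sums converges, then yields (i) and (ii). Finally, (iii) follows because an unbounded shift $\sum_{p\le y}f(p)/p$ would contradict tightness of $f$ itself. A cleaner alternative that I would try first is Elliott's approach: a quantitative Tur\'an--Kubilius dual inequality directly bounds $\sum_{p\le N}|1-e^{itf(p)}|^2/p$ in terms of the concentration of $f$.

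\textbf{Purity and continuity.} The limiting law is an infinite convolution of the discrete laws of the $X_p$. The Jessen--Wintner purity theorem then says it is absolutely continuous, purely singular, or discrete. By L\'evy's theorem on atoms of convergent sums of independent variables, the law has an atom iff $\sum_p \mathbb{P}(X_p\ne0)<\infty$. Up to the convergent prime-power terms, this is exactly $\sum_{f(p)\ne0}1/p<\infty$. This gives the stated continuity criterion.
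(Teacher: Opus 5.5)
The paper does not prove this statement. It quotes it from Tenenbaum and uses it as a black box, so there is no internal argument to compare with. On its own terms, your sufficiency argument is the standard one and is sound: the Kubilius model, the density bound for the values $|f(p^\nu)|>R$, Tur\'an--Kubilius for the remaining values, and L\'evy continuity. Your purity/continuity argument is also standard and sound: Jessen--Wintner, together with L\'evy's atom criterion, where $0$ is the largest atom of $X_p$ for $p\ge 3$. One small addition is needed. L\'evy continuity only gives $F_N(x)\to F(x)$ at continuity points of $F$, while the paper's definition of a.d.f.\ asks for every $x$. This is fixed by noting that in the discrete case $f=f_y$ off a set of upper density at most $\sum_{p>y,\,f(p)\neq 0}1/p+\sum_{p>y}p^{-2}$.

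The gap is necessity, which is the real content of the theorem, and your main route does not work as stated. Symmetrization gives tightness of $f(n)-f(n')$. To pass from this to $f_y(n)-f_y(n')$ via $\mathbb{P}(|U|>T)\le 2\,\mathbb{P}(|U+V|>T)$ for independent symmetric $U,V$, you need $f_y(n)$ and $(f-f_y)(n)$ to be asymptotically independent on $[1,N]$. That is not available. Given the $y$-smooth part $a$ of $n$, the rough part ranges over $y$-rough $m\le N/a$. Nothing in the hypothesis says that the law of $f$ on $y$-rough integers up to $M$ (or on a residue class modulo $\prod_{p\le y}p^k$) has a limit or is insensitive to $M$. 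That would require a separate slow-variation lemma; it is not a formality.

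Your ``cleaner alternative'' is the right fix, but it must be spelled out. Apply the dual Tur\'an--Kubilius inequality
\[
\sum_{p\le N}p\,\Bigl|\sum_{n\le N,\ p\,\|\,n}a_n-\frac1p\Bigl(1-\frac1p\Bigr)\sum_{n\le N}a_n\Bigr|^2\ll N\sum_{n\le N}|a_n|^2
\]
with $a_n=e^{itf(n)}$. Since $e^{itf(pm)}=e^{itf(p)}e^{itf(m)}$ for $p\nmid m$ and $\varphi_N(t)\to\varphi(t)$, the inner difference equals $\frac Np\bigl(\varphi(t)(e^{itf(p)}-1)+O(1/p)+o(1)\bigr)$ for each fixed $p$. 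Keep only $p\le w$ with $w$ fixed and let $N\to\infty$; this avoids needing a rate in $\varphi_N\to\varphi$. The result is $|\varphi(t)|^2\sum_{p\le w}|e^{itf(p)}-1|^2/p\ll 1$ uniformly in $w$. Now choose $t_0>0$ with $|\varphi(t)|\ge\frac12$ on $[0,t_0]$ and average over $t$ in that interval. Since $\frac{1}{t_0}\int_0^{t_0}|e^{itx}-1|^2\,dt\gg\min(1,t_0^2x^2)$, this gives $\sum_p\min(1,f(p)^2)/p<\infty$, which is (i) and (ii).

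For (iii), an unbounded shift is not the only way the series can diverge: $A(N)=\sum_{p\le N,\,|f(p)|\le R}f(p)/p$ may oscillate boundedly. Argue instead as follows. Given (i) and (ii), your sufficiency argument shows that $f(n)-A(N)$ has a limit law $G$. Tightness of $f$ forces $A(N)$ to be bounded, and every limit point $a$ of $A(N)$ satisfies $F(x)=G(x-a)$. Since no probability law is invariant under a nonzero translation, $A(N)$ converges.
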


The Erd\H{o}s-Wintner theorem gives insight not only into additive functions, but also multiplicative functions. If $g$ is a strictly positive multiplicative function satisfying certain reasonable conditions\footnote{$g$ cannot be almost everywhere almost zero, i.e., it cannot be the case that for all $\varep > 0$, $\den\set{n \colon g(n) > \varep} = 0$. An example of a function failing this condition is $f(n) = 1/n$. See \cite[Theorem 4]{Babu}.}, then $g$ possesses an a.d.f.\ $\psi$ if and only if the additive function $\log g$ possesses an a.d.f.\ $\omega$. In this case, $\omega(x) = \psi(e^x)$.

Perhaps most surprising is that the Erd\H{o}s-Wintner Theorem does not require considering $f(p^{\alpha})$ for any $\alpha > 1$. In some sense, this tells us that if an additive function $f$ has an a.d.f., then, for almost all $n$, the value of $f(n)$ is almost determined by its value on the squarefree part of $n$.

As an application of the Erd\H{o}s-Wintner theorem, one can prove the classical theorem of Davenport \cite{Dav} that $n/\sigma(n)$ has a continuous distribution function. The same kind of argument can be applied to the functions $\varphi(n)/n$ and $n/S_{\sigma}(n)$ to show that they, too, have a.d.f.s.

Since $S_s(n)/n$ is not multiplicative, the function $\log S_s(n)/n$ is not additive, so Erd\H{o}s-Wintner cannot be applied directly. However, we will use the continuous distribution functions for $\sigma(n)/n$ and $S_{\sigma}(n)/n$ furnished by these theorems to show $S_s(n)/n$ has a continuous distribution function in Section \ref{sec:cdf}.

\section{$S_s(n)/n$ is dense in $\R^+$}\label{sec:dense}

In this section we will show that the values $S_s(n)/n$ are dense in $[0,\infty)$. First, we begin by recalling a classical result of Schoenberg \cite{Sch36}:

\begin{theorem}[Schoenberg]\label{thm:sigma dense}
The values $n/\sigma(n)$ are dense in $[0,1]$.
\end{theorem}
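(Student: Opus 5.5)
We prove the equivalent statement that the values $\sigma(n)/n$ are dense in $[1,\infty)$. Taking reciprocals, $x\mapsto 1/x$ is a homeomorphism from $[1,\infty)$ onto $(0,1]$, so this gives density of $n/\sigma(n)$ in $(0,1]$, and hence in its closure $[0,1]$. The plan is to work only with squarefree $n$, where multiplicativity gives
\[
\frac{\sigma(n)}{n}=\prod_{p\mid n}\Bigl(1+\frac1p\Bigr),
\]
and to build a product over a suitable set of primes greedily.

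Fix a target $t>1$ and $\varepsilon>0$ (the endpoint $t=1$ is attained by $n=1$). Choose $P$ so large that $1+1/P<t$ and $1/P<\varepsilon/t$. Run through the primes $p>P$ in increasing order, maintaining a running product $\Pi$ that starts at $\Pi=1$. Adjoin $p$, replacing $\Pi$ by $\Pi(1+1/p)$, exactly when $\Pi(1+1/p)\le t$. This produces an increasing sequence of primes $p_1<p_2<\cdots$ whose partial products $\Pi_j=\prod_{i\le j}(1+1/p_i)$ are nondecreasing and bounded by $t$. So they converge to some $L\le t$.

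The key step is to show that $L=t$.
\begin{itemize}
\item Suppose first that only finitely many primes are adjoined, with final product $L<t$. Every sufficiently large prime $p$ must then satisfy $L(1+1/p)>t$, which is impossible because $1+1/p\to 1$.
\item Suppose instead that infinitely many primes are adjoined but $L<t$. Every prime $p>P$ that is skipped at a stage where the current product is $\Pi\le L$ satisfies $\Pi(1+1/p)>t$, so $1+1/p>t/L>1$. Only finitely many primes satisfy this, so all sufficiently large primes are adjoined. Since $\sum_p 1/p=\infty$, the product $\prod(1+1/p)$ over those primes diverges. This contradicts $\Pi_j\le t$.
\end{itemize}
Hence $\Pi_j\to t$, and for large $j$ the squarefree integer $n=p_1\cdots p_j$ satisfies $|\sigma(n)/n-t|<\varepsilon$.

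The only genuinely arithmetic input is the divergence of $\sum_p 1/p$, together with $1/p\to0$. Everything else is the standard greedy-approximation argument for a divergent series with terms tending to zero. I expect the one point needing care to be the case analysis above, which excludes a limit strictly below $t$, and in particular the second case, where the divergence is used.
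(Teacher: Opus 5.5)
The paper gives no proof of this statement. It quotes it from Schoenberg \cite{Sch36} and uses it only to deduce that $s(n)/n=\sigma(n)/n-1$ is dense in $[0,\infty)$. Your argument is a correct, self-contained proof.

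\begin{itemize}
\item The reduction via $x\mapsto 1/x$ is fine.
\item Restricting to squarefree $n$ is fine.
\item Both cases of the analysis are sound. In the second case, the point that every stage has $\Pi\le L$, so every skipped prime satisfies $1+1/p>t/L$, is exactly what is needed to force all large primes to be adjoined.
\item The requirement $1/P<\varepsilon/t$ is never used, but it is harmless.
\end{itemize}

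The closest argument in the paper is its proof of Theorem~\ref{thm:densevalues} for $S_s(n)/n$. That proof also approximates the target from below by adjoining primes, but it proceeds differently. At each step it chooses a single prime $q\in(B(N),2B(N))$ via Bertrand's postulate, so that the error at least halves. This requires a separate seeding step (Claim 2) to produce an $N$ whose prime factors all lie below $B(N)$, and that step itself uses the divergence of $\prod_p(1+2/p)$.

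Your greedy scan over all primes needs only $1/p\to 0$ and $\sum_p 1/p=\infty$, and it needs no seed, but it gives no rate of convergence. The paper's scheme gives geometric convergence, and it is phrased directly in terms of the non-multiplicative recurrence \eqref{eq:product_formula}, which is what lets it handle $S_s$.
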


Since the function $S_s(n)/n$ is not multiplicative, the argument Schoenberg used to prove Theorem \ref{thm:sigma dense} will not work. However, we are able to extract a version of Schoenberg's Theorem for $s(n)/n$ by writing it in terms of the function $\sigma(n)/n$. Namely, since $s(n)/n = \sigma(n)/n - 1$, it follows from Theorem \ref{thm:sigma dense} that the values of $s(n)/n$ are dense in $[0,\infty)$. One might hope that there is a similar representation for $S_s(n)/n$. For example, we can write
\begin{align*}
    S_s(n) &= \sum_{d \mid n} s(d)\\
    &= \sum_{d \mid n} (\sigma(d) - d)\\
    &= \sum_{d \mid n} \sigma(d) - \sum_{d \mid n} d\\
    &= S_{\sigma}(n) - \sigma(n).
\end{align*}
Then $S_s(n)/n = S_{\sigma}(n)/n - \sigma(n)/n$. However, determining whether the values of $S_s(n)/n$ are dense in $[0,\infty)$ from this seems to require that we be able to simultaneously control the growth of $S_{\sigma}(n)/n$ and $\sigma(n)/n$, which seems difficult.

To circumvent these problems, we introduce another relationship involving $S_s$: if $a$ and $b$ are relatively prime integers, then $S_s(ab) = S_s(a)S_s(b) + \sigma(a)S_s(b) + \sigma(b)S_s(a)$. To see this, we write
\begin{align}
    S_s(ab) &= S_{\sigma}(ab) - \sigma(ab) \notag \\
    &= S_{\sigma}(a) S_{\sigma}(b) - \sigma(a) \sigma(b) \notag \\
    &= (S_s(a) + \sigma(a))(S_s(b) + \sigma(b)) - \sigma(a) \sigma(b) \notag \\
    &= S_s(a)S_s(b) + \sigma(a)S_s(b) + \sigma(b)S_s(a). \label{eq:S_striple}
\end{align}

Observe that, when $p$ is a prime, $S_s(p) = 1$. We will use this fact repeatedly throughout the remainder of this section. We now proceed with our result.

\needspace{3cm}
\begin{theorem}\label{thm:densevalues}
The values $S_s(n)/n$ are dense in $[0,\infty)$.
\end{theorem}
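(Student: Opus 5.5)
We use the recurrence \eqref{eq:S_striple} with $b=q$ a prime not dividing $N$. Since $S_s(q)=1$ and $\sigma(q)=q+1$, it gives
\[
S_s(Nq) = S_s(N) + \sigma(N) + (q+1)S_s(N),
\]
and dividing by $Nq$,
\[
\frac{S_s(Nq)}{Nq} = \frac{S_s(N)}{N}\Bigl(1+\frac{2}{q}\Bigr) + \frac{\sigma(N)}{Nq}.
\]
So multiplying by a large new prime changes $S_s(N)/N$ by a small amount. The increment is
\[
\frac{2S_s(N)+\sigma(N)}{Nq} = \frac{S_s(N)}{N}\cdot\frac{2}{q} + \frac{\sigma(N)}{N}\cdot\frac{1}{q}.
\]
This is positive and can be made as small as we like by taking $q$ large. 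The plan is a greedy construction that adds primes one at a time and creeps up to any target from below, exactly as in Schoenberg-type arguments.

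Fix a target $t>0$ and $\varep>0$; the case $t=0$ follows from $S_s(p)/p=1/p\to 0$. Start with $N_0=p_0$ for a large prime $p_0$, so $S_s(N_0)/N_0=1/p_0<t$. Given a squarefree $N_j$ with $x_j := S_s(N_j)/N_j < t$, we add primes $q_1<q_2<\cdots$ that are larger than every prime of $N_j$ and large enough that each increment is below $\varep$. The value only increases, so the real question is whether we can always reach $t$ before stalling. To ensure this, we choose $q$ as the smallest admissible prime for which the new value stays $\le t$, and we need the available increments to sum to infinity.

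The main obstacle is showing that these increments do diverge. We argue as follows. Along the construction, $S_s(N)/N \ge x_0>0$ and $\sigma(N)/N\ge 1$. The updated value satisfies
\[
x' \ge x\Bigl(1+\frac{2}{q}\Bigr) + \frac{1}{q},
\]
so the increments are at least $(2x_0+1)/q$. Since $\sum 1/q$ over primes diverges, adding all sufficiently large primes successively would push the value to $\infty$.

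It remains to control overshoot. At each stage, take the next prime $q$ beyond the current largest. If adding $q$ keeps the value $\le t$, add it; otherwise skip it. Skipping is safe for the following reason. The increment from $q$ is at most
\[
\frac{2S_s(N)+\sigma(N)}{Nq} \le \frac{3\sigma(N)}{Nq} + \frac{2t}{q}.
\]
Since $S_s(N)\le tN$ forces bounded $\sigma(N)/N$ (because $S_s(N)\ge \sigma(N)-N$), this bound is $O_t(1/q)$. Hence, once $q$ is large, if $q$ would overshoot then the current value is already within $O_t(1/q)<\varep$ of $t$.

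So the process has two outcomes. Either some prime is skipped at a stage where $q$ is large, and then we are within $\varep$ of $t$. Or all primes from some point on are added, and then divergence of $\sum 1/q$ contradicts the value staying below $t$. In either case we obtain $n$ squarefree with $|S_s(n)/n - t|<\varep$, which proves density in $[0,\infty)$.
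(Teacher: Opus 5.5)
Your proof is correct, but it follows a different route from the paper's.

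\textbf{The paper's argument.} It works in two stages. Claim 2 multiplies consecutive primes until the first overshoot. This produces a squarefree $N$ with $S_s(N)/N < x$ whose prime factors all lie below
\[
B(N) = \bigl(S_\sigma(N)/N + S_s(N)/N\bigr)/\bigl(x - S_s(N)/N\bigr).
\]
Claim 1 then repeatedly uses Bertrand's postulate to choose a prime $q \in (B(N), 2B(N))$. Each such step keeps the value below $x$ and at least halves the gap. The result is an explicit sequence converging to $x$ at a geometric rate.

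\textbf{Your argument.} You run a single greedy add-or-skip pass over all primes beyond a large $p_0$.
\begin{itemize}
\item Coprimality is automatic, because primes are examined in increasing order.
\item Overshoot is controlled by the a priori bound $\sigma(N)/N \le 1 + S_s(N)/N \le 1+t$, which follows from $S_s(N) \ge s(N)$. This makes every increment $O_t(1/q)$.
\item Termination comes from the divergence of $\sum_p 1/p$. The paper uses the equivalent divergence of $\prod_p (p+2)/p$, but only inside Claim 2.
\end{itemize}

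\textbf{What each route buys.} Yours avoids Bertrand's postulate and the technical hypothesis on the prime factors of $N$, and it merges the paper's two claims into one loop. The paper's gives a quantitative rate of convergence. Your process also yields a convergent sequence if you do not stop at the first skip. It must then skip infinitely many primes, since otherwise the increments of at least $1/q$ would push the value past $t$. Each skip at a prime $q$ gives $0 \le t - x < (3t+1)/q$, so the nondecreasing values tend to $t$, just without a rate.

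\textbf{Minor wording point.} ``The next prime $q$ beyond the current largest'' should read ``the next prime after the last one examined.'' Otherwise a skipped prime would be reconsidered indefinitely.
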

\begin{proof}
Let $x \in [0,\infty)$, and index the primes in increasing order $p_1,p_2,\dots$. If $x = 0$, then the sequence $S_s(p_i)/p_i = 1/p_i$ converges to $x$. Otherwise, $x > 0$. In this case, we break the result down into the following two claims.

\vspace{1em}
\noindent \textit{Claim 1:} if $N > 1$ is an integer such that $\frac{S_s(N)}{N} < x$ and so that every prime factor of $N$ is smaller than
\[B(N) \coloneqq \frac{S_{\sigma}(N)/N + S_s(N)/N}{x - S_s(N)/N},\]
then we can find a prime $q = q(N)$ with $B(N) < q < 2B(N)$, and so that
\[\frac{1}{2}\paren{x + \frac{S_s(N)}{N}} < \frac{S_s(Nq)}{Nq} < x.\]

\vspace{1em}
\noindent \textit{Claim 2:} for any $x > 0$, there is an $N$ satisfying the hypotheses of Claim 1.

\vspace{1em}
To see how the theorem follows from the claims, fix $x$ and let $N$ be an integer satisfying the hypotheses of Claim 1. Starting from $N_1 = N$, we construct a sequence $N_1,N_2,\dots$ by letting $N_{i+1} = N_i q(N_i)$. Then we have
\[0 < x - \frac{S_s(N_{i+1})}{N_{i+1}} < \frac{1}{2} \paren{x - \frac{S_s(N_i)}{N_i}},\]
so the sequence $S_s(N_i)/N_i$ converges to $x$. (Notice that $B(N_{i+1}) \geq 2B(N_i) > q(N_i)$, so if $N_i$ satisfies the hypotheses of Claim 1, then $N_{i+1}$ does as well, and we are indeed able to build this sequence.)

To establish Claim 1, first notice that if $N$ is an integer and $q$ is a prime not dividing $N$, then by \eqref{eq:S_striple},
\begin{align}
\frac{S_s(Nq)}{Nq} &= \frac{S_s(N) S_s(q)}{Nq} + \frac{\sigma(N)S_s(q)}{Nq} + \frac{\sigma(q) S_s(N)}{Nq} \notag \\
&= \frac{1}{q}\paren{\frac{S_s(N)}{N} + \frac{\sigma(N)}{N}} + \frac{q+1}{q} \frac{S_s(N)}{N} \notag \\
& = \frac{1}{q}\frac{S_{\sigma}(N)}{N} + \frac{q+1}{q} \frac{S_s(N)}{N} \notag \\
&= \frac{1}{q} \paren{\frac{S_{\sigma}(N)}{N} + \frac{S_s(N)}{N}} + \frac{S_s(N)}{N}. \label{eq:product_formula}
\end{align}
Now, given $N$ satisfying the hypotheses of Claim 1, we must have that $B(N) \geq 2$ (since all the prime factors of $N$ are less than $B(N)$), and so by Bertrand's Postulate we can find a prime $q$ in the interval $(B(N), 2B(N))$. Such a $q$ is coprime to $N$, so by the above computation
\[\frac{S_s(Nq)}{Nq} = \frac{1}{q} \paren{\frac{S_{\sigma}(N)}{N} + \frac{S_s(N)}{N}} + \frac{S_s(N)}{N}.\]
Using that $q > B(N)$, we obtain
\begin{align*}
    \frac{1}{q} \paren{\frac{S_{\sigma}(N)}{N} + \frac{S_s(N)}{N}} + \frac{S_s(N)}{N} &< \frac{1}{B(N)} \paren{\frac{S_{\sigma}(N)}{N} + \frac{S_s(N)}{N}} + \frac{S_s(N)}{N}\\
    &= \paren{x - \frac{S_s(N)}{N}} + \frac{S_s(N)}{N} = x.
\end{align*}
Further, using that $q < 2B(N)$, we obtain
\begin{align*}
    \frac{1}{q} \paren{\frac{S_{\sigma}(N)}{N} + \frac{S_s(N)}{N}} + \frac{S_s(N)}{N} &> \frac{1}{2B(N)} \paren{\frac{S_{\sigma}(N)}{N} + \frac{S_s(N)}{N}} + \frac{S_s(N)}{N}\\
    &= \frac{1}{2} \paren{x - \frac{S_s(N)}{N}} + \frac{S_s(N)}{N} = \frac{1}{2}\paren{x + \frac{S_s(N)}{N}},
\end{align*}
as desired.

Finally, we show that there exists an $N$ satisfying the hypotheses of Claim 1. Fix $x > 0$, and let $p_k$ be the least prime so that $S_s(p_k)/p_k < x$. (We can find such a prime since $S_s(p)/p = 1/p$.) For $m \geq k$, let $P_m = \prod_{i=k}^m p_i$. From \eqref{eq:product_formula}, since $S_{\sigma}(N) \geq S_s(N)$, we have that $\frac{S_s(Nq)}{Nq} \geq \frac{q+2}{q} \frac{S_s(N)}{N}$ whenever $q$ is a prime not dividing $N$. Applying this inequality repeatedly, we have that
\[S_s(P_m)/P_m \geq \frac{1}{p_k} \prod_{i=k+1}^m \frac{p_i+2}{p_i}.\]
Since the product $\prod_{i > k} \frac{p_i+2}{p_i}$ diverges, so too does $S_s(P_m)/P_m$, and so there exists an $m \geq k$ so that $S_s(P_m)/P_m < x \leq S_s(P_{m+1})/P_{m+1}$. Notice that
\begin{align*}
    x - \frac{S_s(P_m)}{P_m} &\leq \frac{S_s(P_{m+1})}{P_{m+1}} - \frac{S_s(P_m)}{P_m}\\
    &= \frac{S_s(p_{m+1}P_m)}{p_{m+1} P_m} - \frac{S_s(P_m)}{P_m}\\
    &= \frac{1}{p_{m+1}} \paren{\frac{S_{\sigma}(P_m)}{P_m} + \frac{S_s(P_m)}{P_m}}.
\end{align*}
So, $B(P_m) \geq p_{m+1}$ while every prime factor of $P_m$ is smaller than $p_{m+1}$, and so $P_m$ satisfies the hypotheses of Claim 1.
\end{proof}

\section{Continuous distribution function}\label{sec:cdf}

We now prove that $S_s(n)/n$ has a continuous distribution function. Note that our approach differs from the classical analytic approach (cf., \cite{Sch36}, \cite{Sch28}) for an important reason. Using that $S_s(n)/n = S_\sigma(n)/n - \sigma(n)/n$, it is tempting to observe that $\log \sigma(n)/n$ and $\log S_\sigma(n)/n$ are additive functions with continuous distribution functions, and then apply the Erd\H{o}s-Wintner Theorem to these distribution functions. However, it turns out that the distribution function for $\log \sigma(n)/n$ is purely singular, which makes it difficult to directly use these two distribution functions to create a distribution function for $S_s(n)/n$.

To get around this problem, we make use of modern technology that was recently introduced by Lebowitz-Lockard and Pollack \cite{L-LP}. If $f$ is a real-valued arithmetic function, we say \emph{f clusters around the real number $x$} if there exists a real number $d > 0$ such that for all $\varep > 0$,
\[\upden \set{n \colon x - \varep < f(n) < x + \varep} \geq d.\]
If $f$ does not cluster around any $x$, we say $f$ is \emph{nonclustering}. Suppose the arithmetic function $f$ has an a.d.f.\ $F$. It is easy to see that if $F$ is continuous then $f$ is nonclustering. Recall that when $F$ exists, it can be expressed as $\den \set{n : f(n) \leq x}$. Note that for any $\varep > 0$ we have
\[\upden \set{n \colon f(n) = x} \leq \den \set{n \colon x - \varep < f(n) \leq x + \varep} = F(x+\varep) - F(x - \varep).\]
Since $F$ is continuous, as $\varep \to 0$ the right-hand side goes to 0. Thus, $\den \set{n \colon f(n) = x} = 0$. Indeed, the converse also holds.

\begin{lemma}\label{lemma:cdf nonclustering}
If the arithmetic function $f$ has an a.d.f.\ $F$, and if $f$ is nonclustering, then $F$ is continuous.
\end{lemma}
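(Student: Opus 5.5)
I will argue by contraposition: assuming the a.d.f.\ $F$ of $f$ is discontinuous at some point $x$, I will show that $f$ clusters around $x$. Since $F$ is a distribution function, it is non-decreasing and right-continuous. So a discontinuity at $x$ must be a jump from the left, with
\[d \coloneqq F(x) - \lim_{y \to x^-} F(y) > 0.\]
This $d$ will serve as the constant in the definition of clustering.

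Next, fix $\varep > 0$. By monotonicity, $F(x - \varep/2) \leq \lim_{y\to x^-}F(y) = F(x) - d$. Since $f$ has a.d.f.\ $F$, the sets $\set{n\colon f(n)\le x}$ and $\set{n \colon f(n) \leq x-\varep/2}$ have densities $F(x)$ and $F(x-\varep/2)$. Their difference is $\set{n \colon x - \varep/2 < f(n) \leq x}$, and because both densities exist (and one set contains the other), this set has density
\[F(x) - F(x-\varep/2) \geq d.\]
The point $x-\varep/2$ is used rather than $x - \varep$ so as to stay strictly inside the open interval $(x-\varep, x+\varep)$.

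This set is contained in $\set{n \colon x - \varep < f(n) < x + \varep}$. Upper density is monotone under inclusion and agrees with density when the latter exists. Hence the upper density of the larger set is at least $d$, for every $\varep > 0$. This is exactly the statement that $f$ clusters around $x$, which contradicts the assumption that $f$ is nonclustering. Therefore $F$ is continuous.

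The argument has no real obstacle. The only points needing care are two bookkeeping steps. First, right-continuity forces any discontinuity to be a left jump, so $d$ is measured from the left limit. Second, when passing from $F$-values to densities, one must use only closed-on-the-right sets of the form $\set{f(n)\le y}$ and their differences, because pointwise convergence of $F_N$ is guaranteed only for those sets. Using $x-\varep/2$ handles the endpoint subtlety with the open interval in the definition of clustering.
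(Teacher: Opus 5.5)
Your proof is correct and is essentially the paper's argument in contrapositive form: both identify an increment of $F$ near $x$ with the density of $\{n : f(n) \text{ near } x\}$ and bound that density by the upper density appearing in the definition of clustering. Your one-sided version is in fact slightly cleaner. You use right-continuity to reduce to a left jump and work with the half-open set $\{x-\varep/2 < f(n) \le x\}$. The paper instead writes $\den\{x-\varep<f(n)\le x+\varep\}\le\upden\{x-\varep<f(n)<x+\varep\}$, which overlooks the endpoint $f(n)=x+\varep$. Your $\varep/2$ shift is not actually needed, since $(x-\varep,x]$ already lies inside the open interval.
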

\begin{proof}
Recall that $F$ is the pointwise limit of the ``partial'' distribution functions $F_N$ defined as
\[F_N(x) = \frac{\#\set{n \leq N \colon f(n) \leq x}}{N}.\]
Then, we have
\begin{align*}
    F(x+\varep) - F(x - \varep) &= \lim_{N\to \infty} F_N(x+\varep) - F_N(x- \varep)\\
    &= \den \set{n \colon x- \varep < f(n) \leq x+\varep}\\
    &\leq \upden \set{n \colon x - \varep < f(n) < x + \varep}.
\end{align*}
Thus, by the assumption that $f$ is nonclustering, as $\varep \to 0$, we have $F(x+ \varep) - F(x - \varep) \to 0$. Therefore, $F$ is continuous.
\end{proof}

We will use the following two theorems, which appear as Theorem 1 and Proposition 5 in \cite{L-LP}, respectively.

\begin{theorem}[Lebowitz-Lockard and Pollack] \label{thm:nonclustering}
Let $f_1,...,f_k$ be multiplicative arithmetic functions taking values in the nonzero real numbers and satisfying the following conditions:

\begin{enumerate}
    \item $f_1$ is nonclustering,
    \item none of $f_1,\dots,f_k$ cluster around 0,
    \item for all $i < j$ with $i, j \in \{1,2,...,k\}$, the function $f_i/f_j$ is nonclustering.
\end{enumerate}
Then for all nonzero $c_1,...,c_k \in \mathbb{R}$, the arithmetic function $F \coloneqq c_1 f_1 + \cdots c_k f_k$ is nonclustering. 
\end{theorem}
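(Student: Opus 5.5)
The plan is to argue by induction on $k$, by contradiction, using the multiplicative structure to ``decouple'' a typical $n$ into a small-prime part and a large-prime part. For $k=1$ the claim is immediate: $c_1 f_1$ clusters around $x$ if and only if $f_1$ clusters around $x/c_1$, which hypothesis (1) forbids. For the inductive step, suppose $F = c_1f_1+\cdots+c_kf_k$ clusters around some $x$ with lower bound $d>0$. Hypotheses (1)--(3) are inherited by any subfamily containing $f_1$. Dividing through by $f_j$ turns the family into $\{f_i/f_j\}$, which are nonclustering by (3). So the inductive hypothesis should give nonclustering of every shorter linear combination, including combinations of the ratios.

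The decoupling step goes as follows. Fix a large parameter $y$ and write $n = ab$, where $a$ is the $y$-smooth part of $n$ and $b$ is coprime to $a$. Multiplicativity gives
\[ F(n) = \sum_{i=1}^k c_i f_i(a) f_i(b). \]
By a standard sieve/Kubilius-model argument, $a$ is bounded outside a set of density $\to 0$ as the bound grows. Moreover, for each fixed $a$ the integers $n$ with smooth part exactly $a$ form a set of positive density, namely $\frac1a\prod_{p\le y}(1-\frac1p)$. On this set the values $f_i(b)$ range over the values on $y$-rough integers. Since the clustering set has upper density at least $d$, pigeonholing over finitely many $a$ yields one fixed $a_0$ for which the numbers $b$ with $\sum_i c_if_i(a_0)f_i(b)\in(x-\varep,x+\varep)$ still have positive relative upper density, uniformly in $\varep$. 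That is, the combination $G=\sum_i c_i' f_i$ with new nonzero coefficients $c_i'=c_if_i(a_0)$ clusters on rough numbers.

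The main obstacle is to play two different values $a_0\neq a_1$ against each other, both producing clustering of $\sum c_if_i(a)f_i(b)$ near $x$ on overlapping sets of $b$. I would first try to choose $a_1=a_0p$ for a small prime $p$ where the vector $(f_1(p),\dots,f_k(p))$ is not proportional to $(1,\dots,1)$. Such a $p$ should be available because $f_1$ is nonclustering and the ratios $f_i/f_j$ are nonclustering; otherwise $f_i/f_j$ would equal $1$ on primes in a way that forces clustering. Taking a suitable linear combination of the two approximate identities eliminates one term, say the $f_k$ term. This leaves a combination of $k-1$ functions (after dividing by $f_k$, or keeping $f_1$ and the ratios) clustering around a point on a set of positive upper density, contradicting the inductive hypothesis. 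The delicate points are the following. First, one must ensure that the two clustering sets overlap in positive density; I would get this from the near-independence of the rough part from the smooth part, which gives near-product densities. Second, one must handle the case where the eliminated combination clusters around $0$, which is where hypothesis (2) is needed to control $f_k(b)$ away from $0$ and $\infty$ on most $b$.
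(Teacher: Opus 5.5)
The paper gives no proof of this statement; it quotes it from Lebowitz-Lockard and Pollack. So your sketch has to stand on its own, and it has a genuine gap. The decoupling $n=ab$, the pigeonhole to one smooth part $a_0$, and the elimination algebra are fine. The step that carries the whole argument is not justified: that the fibre over $a_1=a_0p$ overlaps the fibre over $a_0$.

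Write $S_\varepsilon=\{n:|F(n)-x|<\varepsilon\}$. Pigeonholing gives you only \emph{one} smooth part $a_0$ whose fibre $\{b: a_0b\in S_\varepsilon\}$ is dense among $y$-rough $b$. It says nothing about the fibre over $a_0p$, which may be empty. Nothing in the contradiction hypothesis links $F(a_0b)$ to $F(a_0pb)$.

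\emph{Near-independence} of smooth and rough parts is a property of the ambient density, i.e.\ of sets $a\cdot B$ with $B$ a fixed set of rough numbers. The set $S_\varepsilon$ couples $a$ and $b$ through $F$, so its fibres over different $a$ are unrelated sets of $b$. A set can have density $1/2$ and contain no pair $n,np$ at all, e.g.\ $\{n:\Omega(n)\text{ even}\}$. So overlap can only come from a property of the $f_i$, and your sketch never supplies one.

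Your choice of $p$ is also the wrong condition. Asking that $(f_i(p))_i$ not be proportional to $(1,\dots,1)$ is not enough; you need $f_1(p)\ne f_k(p)$. Otherwise $f_1$ drops out of the reduced family and hypothesis (1) is lost; note that $f_k\equiv 1$ is allowed and clusters.

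The missing idea is a mechanism that \emph{forces} two fibres to overlap. One such mechanism is the following.
\begin{enumerate}
\item A single smooth part $a$ carries density $\frac1a\prod_{p\le y}(1-\frac1p)\le\prod_{p\le y}(1-\frac1p)\to 0$. Hence a set of upper density $\ge d$ must be spread over very many smooth parts.
\item A Cauchy--Schwarz count of triples $(a,a',b)$ with $ab,a'b\in S_\varepsilon$ then yields \emph{some} pair $a\ne a'$ whose fibres overlap with upper density bounded below independently of $\varepsilon$. Do this count directly over $n\le N$: fibres over different $a$ sit at different scales $N/a$, and upper densities do not transfer between scales.
\item You do not control this pair, so eliminating $f_k$ keeps $f_1$ only if $(f_1/f_k)(a)\ne(f_1/f_k)(a')$. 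This needs a non-concentration estimate for $f_1/f_k$ on $y$-smooth numbers, and this is where (3) must be used quantitatively. Nonclustering of a multiplicative $g$ forces $\sum_{|g(p)|\ne 1}1/p=\infty$, and then an Erd\H{o}s-type concentration bound for the additive function $\log|g|$ applies.
\item With such a pair, your elimination $f_k(a')F(ab)-f_k(a)F(a'b)$ gives a shorter combination containing $f_1$ that clusters, and induction finishes.
\end{enumerate}

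Finally, hypothesis (2) is not what rescues the case where the reduced combination clusters at $0$. The inductive hypothesis already rules out clustering at every real point, including $0$.
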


\begin{theorem}[Lebowitz-Lockard and Pollack] \label{thm:lin combo df}
Let $f_1, \dots, f_k$ be positive-valued multiplicative functions each possessing a distribution function. Then for any $c_1,\dots,c_k \in \R$, the function $c_1 f_1 + \dots + c_k f_k$ also has a distribution function.
\end{theorem}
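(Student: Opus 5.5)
We plan to reduce the statement to the Erd\H{o}s--Wintner theorem (Theorem~\ref{thm:erdoswintner}) by \emph{joint} truncation at small primes, and then to pass to the limit with a standard approximation lemma for distribution functions.

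First, since each $f_i$ is positive, multiplicative and has a distribution function, the additive function $h_i=\log f_i$ has an a.d.f.\ by the remark after Theorem~\ref{thm:erdoswintner} (Babu's criterion). Hence the three series in Theorem~\ref{thm:erdoswintner} converge for $h_i$. For a parameter $y$, write $n=n_y n_y'$, where $n_y$ is the $y$-smooth part of $n$. Put $g_i^{(y)}(n)=f_i(n_y)$ and $F_y=\sum_i c_i g_i^{(y)}$. The function $F_y$ depends on $n$ only through $n_y$. For each $y$-smooth $m$, the set $\{n: n_y=m\}$ has density $\frac1m\prod_{p\le y}(1-1/p)$, and these densities sum to $1$. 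Thus $F_y$ is a function taking countably many values with well-defined densities summing to $1$, and so it has an a.d.f.\ $\Phi_y$ (tightness being automatic from the summability).

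The key step is to show that for every $\varep>0$ and $\eta>0$ there is a $y_0$ such that, for $y\ge y_0$,
\[
\upden\set{n\colon \abs{h_i(n)-h_i(n_y)}>\varep \text{ for some } i}<\eta .
\]
We plan to split $h_i(n_y')$ into three pieces:
\begin{enumerate}
\item the contribution of primes $p>y$ with $p^2\mid n$, which is nonzero only on a set of density at most $\sum_{p>y}p^{-2}$;
\item the contribution of primes $p>y$, $p\| n$, with $\abs{h_i(p)}>R$, which is nonzero only on a set of density at most $\sum_{p>y,\,\abs{h_i(p)}>R}1/p$, the tail of series (i);
\item the truncated additive function $\sum_{p\mid n,\,p>y,\,\abs{h_i(p)}\le R}h_i(p)$.
\end{enumerate}
For piece (3) we will apply the Tur\'an--Kubilius inequality. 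Its mean is essentially the tail of series (iii), and its variance is bounded by the tail of series (ii), so both tend to $0$ as $y\to\infty$. Chebyshev then gives the claim for each $i$, and a union bound handles all $k$ functions simultaneously. The one subtlety is the uniformity in $x$ of the Tur\'an--Kubilius estimate when the sum runs over $y<p\le x$. Since series (iii) converges only conditionally, we will handle this by truncating at $p\le x^{\delta}$ and discarding the range $x^{\delta}<p\le x$, whose reciprocal sum is $O(\log(1/\delta))$ times a small factor coming from $\abs{h_i(p)}\le R$. We expect this uniformity issue to be the main technical obstacle.

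Next, since each $f_i$ has an a.d.f., it is tight: outside a set of density $<\eta$ we have $f_i(n)\le M$, and likewise $f_i(n_y)\le M'$. On the complement of the exceptional sets, $\abs{f_i(n)-f_i(n_y)}\le M'(e^{\varep}-1)$, and therefore
\[
\abs{F(n)-F_y(n)}\le M'(e^\varep-1)\sum_i\abs{c_i}
\]
outside a set of upper density $O(k\eta)$. Finally we invoke the standard approximation lemma. If for every $\varep,\eta$ there is a function with an a.d.f.\ agreeing with $F$ to within $\varep$ outside upper density $\eta$, then $\limsup_N F_N(x)-\liminf_N F_N(x)$ at $x$ is controlled by $\Phi_y(x+\varep)-\Phi_y(x-\varep)+2\eta$. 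We will use this to show that the $\Phi_y$ form a Cauchy family in the L\'evy metric, that their limit $\Phi$ is a distribution function (tightness being inherited from $F$), and that $F_N\to\Phi$ at continuity points. Right-continuity then defines $F$'s a.d.f.\ everywhere, up to the usual convention at atoms.
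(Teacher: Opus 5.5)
The paper gives no proof of this statement: it quotes it as Proposition~5 of Lebowitz-Lockard and Pollack~\cite{L-LP}, noting only that it is proved by explicit upper-density estimates. Your proposal is therefore a self-contained replacement for that citation, and as such it is correct. You use the three series for $h_i=\log f_i$ and Tur\'an--Kubilius on the truncated tail to get $\lim_{y\to\infty}\upden\set{n\colon \abs{h_i(n)-h_i(n_y)}>\varep}=0$. Tightness of $f_i$ then makes $F(n)$ and $F_y(n)$ close off a set of small upper density, $F_y$ has an a.d.f.\ because it factors through $n_y$, and the standard approximation lemma finishes. This is the usual truncation proof of Erd\H{o}s--Wintner-type results. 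Compared with the citation, it costs a page, but it stays within the paper's own toolkit (Theorem~\ref{thm:erdoswintner}) plus Tur\'an--Kubilius. It also identifies the limit law of $c_1f_1+\dots+c_kf_k$ as the weak limit of the explicit discrete laws $\Phi_y$.

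One correction: the ``main technical obstacle'' you anticipate does not exist. Series (iii) converges, conditionally or not, so the Cauchy criterion gives $\sup_{x>y}\bigl\lvert\sum_{y<p\le x,\ \lvert h_i(p)\rvert\le R} h_i(p)/p\bigr\rvert\to 0$ as $y\to\infty$. That is exactly the uniform control of the Tur\'an--Kubilius mean you need, so no truncation at $x^{\delta}$ is required. Your stated justification for that truncation is also off. The condition $\lvert h_i(p)\rvert\le R$ supplies no small factor; any smallness there would have to come from the tail of series (ii) via Cauchy--Schwarz.

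Finally, your first step invokes the remark after Theorem~\ref{thm:erdoswintner}, which carries a non-degeneracy hypothesis. Under the paper's definition of a.d.f.\ this is automatic for positive $f_i$: $F_N(0)=0$ together with right-continuity forces the limit law to have no mass at $0$. Under the weak-convergence convention you adopt in your last sentence, however, you must treat separately an $f_i$ whose limit law has mass at $0$. Such an $f_i$ in fact tends to $0$ in density and can be dropped, but that needs an argument.
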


Both of these theorems are proven by explicit estimation of upper densities by using the arithmetic properties of the functions $f_i$. We now proceed with the proof of Theorem \ref{thm:cdf}.

\begin{theorem}\label{thm:cdf}
The function $S_s(n)/n$ has a continuous a.d.f.
\end{theorem}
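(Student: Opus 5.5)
We write $S_s(n)/n = f_1(n) - f_2(n)$ with $f_1(n) = S_\sigma(n)/n$ and $f_2(n) = \sigma(n)/n$. Both are positive-valued multiplicative functions. The plan has three steps: use Theorem \ref{thm:lin combo df} to get an a.d.f., use Theorem \ref{thm:nonclustering} to show $S_s(n)/n$ is nonclustering, and then apply Lemma \ref{lemma:cdf nonclustering} to conclude that the a.d.f. is continuous.

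\emph{Existence of an a.d.f.} For each $f_i$ we check the Erd\H{o}s--Wintner criterion on $\log f_i$, using the remark following Theorem \ref{thm:erdoswintner} to pass from $\log f_i$ back to $f_i$. At primes,
\[
f_2(p) = 1 + \frac1p, \qquad f_1(p) = \frac{S_\sigma(p)}{p} = \frac{1 + (p+1)}{p} = 1 + \frac2p .
\]
Hence $\log f_i(p) = O(1/p)$, and all three series in Theorem \ref{thm:erdoswintner} converge. Neither $f_i$ is almost everywhere almost zero, since $f_i \ge 1$. Both therefore have a.d.f.s, and Theorem \ref{thm:lin combo df} with $c_1 = 1$, $c_2 = -1$ gives an a.d.f. for $S_s(n)/n$.

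\emph{Nonclustering.} We verify the hypotheses of Theorem \ref{thm:nonclustering} for $f_1$ and $f_2$.
\begin{enumerate}
\item $f_1$ is nonclustering. Since $\log f_1(p) \neq 0$ for every prime $p$, the series $\sum_{\log f_1(p)\ne 0} 1/p$ diverges. Erd\H{o}s--Wintner then says $\log f_1$ has a continuous a.d.f., so $f_1$ does too, and a continuous a.d.f. implies nonclustering, as noted before Lemma \ref{lemma:cdf nonclustering}.
\item Neither $f_i$ clusters around $0$, because $f_i \ge 1$.
\item $f_1/f_2$ is nonclustering. This is a positive multiplicative function with
\[
\frac{f_1(p)}{f_2(p)} = \frac{p+2}{p+1} = 1 + \frac{1}{p+1} .
\]
So $\log(f_1/f_2)(p) = O(1/p)$ is nonzero at every prime, and the same Erd\H{o}s--Wintner argument gives a continuous a.d.f., hence nonclustering.
\end{enumerate}
Theorem \ref{thm:nonclustering} then shows $f_1 - f_2 = S_s(n)/n$ is nonclustering, and Lemma \ref{lemma:cdf nonclustering} yields continuity of its a.d.f.

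\emph{Main obstacle.} The only delicate point is the passage between a continuous d.f. for the additive function $\log g$ and nonclustering of $g$ itself. Since $g > 0$ and $\log$ is a homeomorphism from $(0,\infty)$ onto $\R$, a cluster point $x > 0$ of $g$ corresponds to a cluster point $\log x$ of $\log g$. Clustering at $0$ is excluded because $g \ge 1$. This settles the point.
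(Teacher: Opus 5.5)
Your proposal is correct and follows the paper's proof step for step. It writes $S_s(n)/n = S_\sigma(n)/n - \sigma(n)/n$, gets the a.d.f.\ from the Lebowitz-Lockard--Pollack linear-combination theorem, checks the nonclustering hypotheses by applying Erd\H{o}s--Wintner to $\log f_1$, $\log f_2$ and $\log(f_1/f_2)$, and concludes continuity via Lemma~\ref{lemma:cdf nonclustering}; the explicit prime values $f_1(p)=1+2/p$, $f_2(p)=1+1/p$ and the observation $f_i \ge 1$ (which rules out clustering at $0$) fill in details the paper leaves implicit.
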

\begin{proof}
Recall that we can write 
\[S_s(n) = \sum_{d \mid n} (\sigma(d) - d) = S_{\sigma}(n) - \sigma(n).\]
Thus,
\[\frac{S_s(n)}{n} = \frac{S_\sigma(n)}{n} - \frac{\sigma(n)}{n}\]
is a difference of two multiplicative functions.

Let $f_1 = S_\sigma(n)/n$, $f_2 = \sigma(n)/n$, and $F = f_1 + (-1)f_2$. We have previously stated that $f_1$ and $f_2$ have distribution functions, so by Theorem \ref{thm:lin combo df} above, $F$ has an a.d.f.\ To show that the distribution function for $F$ is continuous, by Lemma \ref{lemma:cdf nonclustering} it suffices to show that it satisfies the hypotheses of Theorem \ref{thm:nonclustering}. We may apply Theorem \ref{thm:erdoswintner} to the additive functions $\log f_1$, $\log f_2$, and $\log (f_1/f_2)$ to show that $f_1$, $f_2$ and $f_1/f_2$ have continuous a.d.f.s. Thus, conditions (1)-(3) of Theorem \ref{thm:nonclustering} are satisfied. Therefore, $F$ is nonclustering. Since a distribution function for an arithmetic function $F$ is continuous precisely when $F$ is nonclustering, it follows that $F$ is continuous.
\end{proof}

\section{Mean values and moments of $S_s(n)/n$}\label{sec:mvm}

In this section we will look at some common statistics for the function $S_s(n)/n$. In the first subsection we will compute the mean values $M_x(S_s(n))$ and $M(S_s(n)/n)$. These results will ground our discussion in the following subsection, where we will establish the existence of moments of $S_s(n)/n$. 

\subsection{Mean values of $S_s(n)$ and $S_s(n)/n$}

To begin with, recall that by an elementary summation argument, $M_x(\sigma(n)) = \zeta(2)x/2 + O(\log x)$. We can use this fact to derive $M_x(S_{\sigma}(n))$ as follows:
\begin{align*}
    \frac{1}{x} \sum_{n \leq x} S_{\sigma}(n) &= \frac{1}{x} \sum_{n \leq x} \sum_{d \mid n} \sigma(d)\\
    &= \frac{1}{x} \sum_{\substack{d,q \\ dq \leq x}} \sigma(d)\\
    &= \frac{1}{x} \sum_{q \leq x} \sum_{d \leq x/q} \sigma(d)\\
    &= \frac{1}{x} \sum_{q \leq x} \paren{ \frac{\zeta(2)}{2} \paren{\frac{x}{q}}^2 + O\paren{\frac{x \log x}{q}}}.
    \end{align*}
 
From here it is a straightforward computation to verify that $M_x(S_{\sigma}(n)) = \zeta(2)^2x/2 + O((\log x)^2)$. We use these two values to compute the following result.

\begin{theorem}
The mean value $M_x(S_s(n))$ is given by
\[M_x(S_s(n)) = \frac{\zeta(2)(\zeta(2)-1)}{2}x + O((\log x)^2).\]
\end{theorem}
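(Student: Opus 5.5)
The plan is to use linearity of the mean together with the identity $S_s(n) = S_\sigma(n) - \sigma(n)$ from Section \ref{sec:dense}. Summing this over $n \le x$ and dividing by $x$ gives
\[M_x(S_s(n)) = M_x(S_\sigma(n)) - M_x(\sigma(n)).\]
Both terms on the right have already been discussed: $M_x(\sigma(n)) = \zeta(2)x/2 + O(\log x)$, and the computation just before the statement gives $M_x(S_\sigma(n)) = \zeta(2)^2 x/2 + O((\log x)^2)$. Subtracting, the main terms combine to $\frac{\zeta(2)^2 - \zeta(2)}{2}x = \frac{\zeta(2)(\zeta(2)-1)}{2}x$. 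The error terms combine to $O((\log x)^2) + O(\log x) = O((\log x)^2)$.

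The only substantive step is to justify the asserted estimate for $M_x(S_\sigma(n))$. From the displayed chain, we need to evaluate
\[\frac{1}{x}\sum_{q\le x}\Bigl(\frac{\zeta(2)}{2}\frac{x^2}{q^2} + O\Bigl(\frac{x\log x}{q}\Bigr)\Bigr).\]
The main part is $\frac{\zeta(2)}{2}x\sum_{q\le x}q^{-2}$. Since $\sum_{q\le x} q^{-2} = \zeta(2) + O(1/x)$, this equals $\frac{\zeta(2)^2}{2}x + O(1)$. The error part is $O(\log x\sum_{q\le x}1/q) = O((\log x)^2)$ by the harmonic sum bound. One point to check is that the input estimate $\sum_{d\le y}\sigma(d) = \frac{\zeta(2)}{2}y^2 + O(y\log y)$ is applied with $y = x/q$, where $\log(x/q) \le \log x$; this makes the uniform error $O\bigl(\frac{x}{q}\log x\bigr)$ valid. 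We also need this bound for small $y$, say $y<2$, where it holds trivially after adjusting constants.

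I expect no real obstacle: the argument is bookkeeping. The only care needed is that the $O$-constants are uniform in $q$ and that the tail of $\sum q^{-2}$ contributes only $O(1)$, which is absorbed into the error.
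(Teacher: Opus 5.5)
Your proposal is correct and follows the paper's proof: you use $S_s = S_\sigma - \sigma$ and linearity of $M_x$ to subtract the estimates $M_x(\sigma(n)) = \zeta(2)x/2 + O(\log x)$ and $M_x(S_\sigma(n)) = \zeta(2)^2x/2 + O((\log x)^2)$, exactly as the paper does. Your check of the latter estimate supplies the ``straightforward computation'' that the paper leaves to the reader. That check has three parts: the tail $\sum_{q>x}q^{-2} = O(1/x)$, the harmonic sum giving $O((\log x)^2)$, and the error taken uniformly as $O((x/q)\log x)$ so that it also covers small $x/q$.
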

\begin{proof}
By linearity of $M_x$, we compute
\begin{align*}
    M_x(S_s(n)) &= M_x(S_{\sigma}(n) - \sigma(n))\\
    &= M_x(S_{\sigma}(n)) - M_x(\sigma(n))\\
    &= \frac{\zeta(2)(\zeta(2)-1)}{2}x + O((\log x)^2).
\end{align*}
\end{proof}

The following is an immediate corollary.

\begin{cor}
We have
\[M(S_s(n)/n) = \zeta(2)(\zeta(2)-1).\]
\end{cor}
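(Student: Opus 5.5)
The plan is to avoid deducing the corollary from the asymptotic for $M_x(S_s(n))$ via partial summation, since the error term there is only $O((\log x)^2)$ and the main term $cx$ gives an average of $S_s(n)$ of size $x$. Instead I will compute $M_x(S_s(n)/n)$ directly, using $S_s(n)/n = S_\sigma(n)/n - \sigma(n)/n$ and linearity of $M_x$. That reduces the corollary to two mean values: $M(\sigma(n)/n) = \zeta(2)$ and $M(S_\sigma(n)/n) = \zeta(2)^2$.

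For the first, I write $\sigma(n)/n = \sum_{d\mid n} 1/d$. Swapping the order of summation gives
\[\frac1x\sum_{n\le x}\frac{\sigma(n)}{n} = \frac1x\sum_{d\le x}\frac1d\floor{\frac xd} = \sum_{d\le x}\frac1{d^2} + O\paren{\frac{\log x}{x}},\]
which tends to $\zeta(2)$.

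For the second, I note that $S_\sigma(n)/n = \sum_{d\mid n}\sigma(d)/n$. Writing $n = dq$, this becomes $\sum_{dq=n} \frac{\sigma(d)}{d}\cdot\frac1q$. Equivalently, $S_\sigma(n)/n = \sum_{abq=n} \frac{1}{ab}\cdot\frac{1}{q}\cdot$ (a rearrangement). The cleanest form is $S_\sigma(n)/n = \sum_{m\mid n} \tau(m)/m$, where $\tau$ is the divisor-counting function. This holds because $\sum_{d\mid n}\sigma(d) = \sum_{d\mid n}\sum_{e\mid d} e$, and the number of $d$ with $e\mid d\mid n$ is $\tau(n/e)$. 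So $S_\sigma(n)/n = \sum_{e\mid n} \tau(n/e)/(n/e)$, which is $\sum_{m\mid n}\tau(m)/m$.

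Summing over $n\le x$ as before gives
\[\frac1x\sum_{n\le x}\frac{S_\sigma(n)}{n} = \frac1x\sum_{m\le x}\frac{\tau(m)}{m}\floor{\frac xm} = \sum_{m\le x}\frac{\tau(m)}{m^2} + O\paren{\frac1x\sum_{m\le x}\frac{\tau(m)}{m}}.\]
The error term is $O((\log x)^2/x)$. The main term converges to $\sum_m \tau(m)/m^2 = \zeta(2)^2$.

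Subtracting the two limits gives $M(S_s(n)/n) = \zeta(2)^2 - \zeta(2) = \zeta(2)(\zeta(2)-1)$, as claimed. The only mildly delicate step is the divisor-sum identity $S_\sigma(n)/n = \sum_{m\mid n}\tau(m)/m$. Once that identity is in hand, the rest is the standard hyperbola-free interchange of summation with the floor bounded trivially.
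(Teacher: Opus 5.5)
Your proof is correct, but it takes a different route from the paper. The paper does exactly what you set out to avoid. It takes $\sum_{n\le x}S_s(n)=\frac{\zeta(2)(\zeta(2)-1)}{2}x^2+O(x(\log x)^2)$ from the preceding theorem and applies partial summation with weight $1/n$. This gives $\sum_{n\le x}S_s(n)/n=M_x(S_s(n))+\int_1^x M_t(S_s(n))\,\frac{dt}{t}=\zeta(2)(\zeta(2)-1)x+O((\log x)^3)$.

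Your stated reason for avoiding this is unfounded. The error $O((\log x)^2)$ in $M_x(S_s(n))$ is negligible against the main term of size $x$, and the integral costs only one extra logarithm, so partial summation works fine.

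Your approach instead averages the two multiplicative pieces directly. You write $\sigma(n)/n=\sum_{d\mid n}1/d$ and $S_\sigma(n)/n=\sum_{m\mid n}\tau(m)/m$ (from $S_\sigma=\mathrm{id}*\tau$, which you verify correctly), then interchange summation. What this buys:
\begin{itemize}
\item It is self-contained: it needs neither the preceding theorem nor the asymptotics for $\sum\sigma(n)$ and $\sum S_\sigma(n)$.
\item It shows the limits $\zeta(2)$ and $\sum_m\tau(m)/m^2=\zeta(2)^2$ transparently as Dirichlet series values.
\item It gives a slightly sharper error, $\sum_{n\le x}S_s(n)/n=\zeta(2)(\zeta(2)-1)x+O((\log x)^2)$.
\end{itemize}
The paper's route, in exchange, makes the statement a genuine corollary of the mean value of $S_s(n)$ it has just computed.

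One small slip: the aside $S_\sigma(n)/n=\sum_{abq=n}\frac{1}{ab}\cdot\frac1q$ is wrong as written, since that summand is just $1/n$. The correct form is $\sum_{abq=n}\frac{1}{aq}$. You never use that line, so the argument is unaffected.
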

\begin{proof}
Consider the sum $\sum_{n \leq x} S_s(n)/n$. Applying partial summation with $a_n = S_s(n)$ and $f(n) = 1/n$ we find
\begin{align*}
    \sum_{n \leq x} \frac{S_s(n)}{n} &= \frac{1}{x}\sum_{n \leq x} S_s(n) + \int_1^x \frac{\sum_{n \leq t} S_s(n)}{t^2}\ dt\\
    &= M_x(S_s(n)) + \int_1^x \frac{M_t(S_s(n))}{t}\ dt\\
    &= \frac{\zeta(2)(\zeta(2)-1)}{2}x + O((\log x)^2) + \int_1^x \paren{\frac{\zeta(2)(\zeta(2)-1)}{2} + O((\log t)^2/t)}\ dt\\
    &= \frac{\zeta(2)(\zeta(2)-1)}{2}x + O((\log x)^2) + \paren{\frac{\zeta(2)(\zeta(2)-1)}{2}t}\bigg\rvert_1^x + O((\log t)^3\big\rvert_1^x)\\
    &= \zeta(2)(\zeta(2)-1)x + O((\log x)^3).
\end{align*}
Thus, $M(S_s(n)/n) = \lim_{x \to \infty} \frac{1}{x} \sum_{n \leq x} S_s(n)/n = \zeta(2)(\zeta(2)-1)$.
\end{proof}

\subsection{Existence of the moments of $S_s(n)/n$}\label{sec:moments}

In this section, we establish the existence of moments of $S_s(n)/n$, i.e., the quantities
\[\mu_k = \lim_{n \to \infty} \frac{1}{n}\sum_{i=1}^n (S_s(i)/i)^k.\]
We will make use of a powerful tool known as Wintner's Mean Value Theorem for multiplicative functions \cite[Theorem 1, p. 138]{Post}.
\begin{theorem}[Wintner's Mean Value Theorem]\label{thm:wintner}
If $g$ is a multiplicative function satisfying
\renewcommand{\theenumi}{\roman{enumi}}
\begin{center}
\begin{minipage}{.4\textwidth}
\begin{enumerate}
    \item $\displaystyle{\sum_p \frac{\abs{g(p)-1}}{p} < \infty}$
    \item $\displaystyle{\sum_p \sum_{\nu=2}^{\infty} \frac{\abs{g(p^{\nu})-g(p^{\nu-1})}}{p^{\nu}} < \infty}$
\end{enumerate}
\end{minipage}
\end{center}
then the mean value of $g$ exists and is finite.
\end{theorem}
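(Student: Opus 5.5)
The plan is the standard convolution argument. Define the multiplicative function $h = \mu * g$, so that $g = 1 * h$, i.e.\ $g(n) = \sum_{d \mid n} h(d)$. On prime powers, $h(p) = g(p) - 1$ and $h(p^{\nu}) = g(p^{\nu}) - g(p^{\nu-1})$ for $\nu \geq 2$. In this notation, hypotheses (i) and (ii) of Theorem \ref{thm:wintner} together say exactly that
\[\sum_p \sum_{\nu \geq 1} \frac{\abs{h(p^\nu)}}{p^\nu} < \infty.\]

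The first step is to show that $\sum_{n \geq 1} \abs{h(n)}/n$ converges. Since $\abs{h}$ is multiplicative and nonnegative, every partial sum $\sum_{n \leq y} \abs{h(n)}/n$ is at most the finite Euler product $\prod_{p \leq y} \bigl(1 + \sum_{\nu \geq 1} \abs{h(p^\nu)}/p^\nu\bigr)$. Each factor is finite, because its inner sum is one term of a convergent double series. Using $1 + t \leq e^t$, this product is bounded by $\exp\bigl(\sum_p \sum_{\nu} \abs{h(p^\nu)}/p^\nu\bigr) < \infty$, uniformly in $y$. So the series converges absolutely. I expect this to be the main point of the proof; once it is in hand, the rest is bookkeeping.

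Next, swap the order of summation:
\[\sum_{n \leq x} g(n) = \sum_{d \leq x} h(d) \floor{x/d} = x \sum_{d \leq x} \frac{h(d)}{d} + O\Bigl(\sum_{d \leq x} \abs{h(d)}\Bigr).\]
Dividing by $x$, the main term tends to $\sum_{d} h(d)/d$, which is finite by the first step. For the error term, set $a_d = \abs{h(d)}/d$. The series $\sum a_d$ converges, so Kronecker's lemma gives $\frac{1}{x}\sum_{d \leq x} d\, a_d \to 0$. If one prefers to avoid citing it, the same conclusion follows by splitting at $d \leq \sqrt{x}$, which contributes at most $x^{-1/2}\sum_d a_d$, and $d > \sqrt{x}$, which contributes at most $\sum_{d > \sqrt{x}} a_d$ since $d/x \leq 1$. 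Hence $\frac{1}{x}\sum_{d \leq x}\abs{h(d)} = o(1)$, and so $M(g)$ exists and equals $\sum_d h(d)/d$.

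Finally, absolute convergence lets us write the mean value as the Euler product
\[\prod_p \Bigl(1 + \sum_{\nu \geq 1} \frac{h(p^\nu)}{p^\nu}\Bigr) = \prod_p \Bigl(1 - \frac{1}{p}\Bigr) \sum_{\nu \geq 0} \frac{g(p^\nu)}{p^\nu},\]
which is the form in which the theorem will be applied to the moments of $S_s(n)/n$.
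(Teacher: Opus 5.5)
Your proof is correct and complete. Writing $g = 1 * h$ with $h = \mu * g$, bounding the partial sums of $\lvert h(n)\rvert/n$ by a finite Euler product, and killing $\frac{1}{x}\sum_{d\le x}\lvert h(d)\rvert$ by splitting at $\sqrt{x}$ is the standard proof of Wintner's theorem. The paper gives no proof of its own and simply quotes the result from Postnikov.

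The one unjustified step is your closing Euler-product formula, which the statement does not require. Rewriting $\sum_{\nu\ge 0} h(p^\nu)p^{-\nu}$ as $(1-1/p)\sum_{\nu\ge 0} g(p^\nu)p^{-\nu}$ needs the second series to converge. This follows from $\lvert g(p^\nu)\rvert \le \sum_{j\le \nu}\lvert h(p^j)\rvert$, since summing over $\nu$ gives $\sum_{\nu\ge 0}\lvert g(p^\nu)\rvert p^{-\nu} \le \frac{p}{p-1}\sum_{j\ge 0}\lvert h(p^j)\rvert p^{-j}$.
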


There are a few other facts we will make use of to establish our estimates for $\mu_k$. We will use the following expressions for the functions $\sigma$ and $S_{\sigma}$:
\begin{align}
    \sigma(p^{\nu}) &= p^{\nu}\paren{1+\frac{1}{p-1}} - \frac{1}{p-1}, \label{sigma} \\
    S_{\sigma}(p^{\nu}) &= p^{\nu}\paren{1+\frac{1}{p-1}}^2 - \frac{\nu+2}{p-1} - \frac{1}{(p-1)^2}. \label{S_sigma}
\end{align}
We obtain these expressions by writing
\begin{align*}
    \sigma(p^{\nu}) &= \sum_{i=0}^{\nu} p^i\\
    &= \frac{p^{\nu+1}-1}{p-1}.
\end{align*}
Pulling out $p^{\nu}$ yields \eqref{sigma}, and \eqref{S_sigma} follows from a similar argument.

We may now proceed with the result.

\begin{theorem} \label{thm:moment est}
The moments $\mu_k$ exist and are finite. 
\end{theorem}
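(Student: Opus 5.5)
Our plan is to expand the $k$th power binomially and apply Wintner's Mean Value Theorem (Theorem \ref{thm:wintner}) to each resulting term. Writing $f_1(n) = S_\sigma(n)/n$ and $f_2(n) = \sigma(n)/n$, we have
\[\paren{\frac{S_s(n)}{n}}^k = \sum_{j=0}^{k} \binom{k}{j} (-1)^{k-j} f_1(n)^j f_2(n)^{k-j}.\]
Each $g_j \coloneqq f_1^j f_2^{k-j}$ is a product of multiplicative functions, hence multiplicative. So it suffices to show that every $g_j$ has a finite mean value. The $k$th moment $\mu_k$ is then the corresponding finite signed combination of these means. Finiteness is automatic once the limits exist; note also that $\mu_k \geq 0$, since $S_s(n)\ge 0$.

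To verify condition (i) of Theorem \ref{thm:wintner}, we evaluate at primes: $f_2(p) = 1 + 1/p$, and by \eqref{S_sigma} with $\nu=1$, $f_1(p) = (p+3)/p = 1 + 3/p$. Hence
\[g_j(p) = (1+3/p)^j (1+1/p)^{k-j} = 1 + O_k(1/p),\]
so $\sum_p \abs{g_j(p)-1}/p \ll_k \sum_p 1/p^2 < \infty$.

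Condition (ii) is where the work lies. From \eqref{sigma} and \eqref{S_sigma}, $f_2(p^\nu) \le p/(p-1) \le 2$ and $f_1(p^\nu) \le (p/(p-1))^2 \le 4$ for all $\nu$, uniformly in $p$. Thus $\abs{g_j(p^\nu)} \le 4^k$, and
\[\sum_p \sum_{\nu\ge 2} \frac{\abs{g_j(p^\nu) - g_j(p^{\nu-1})}}{p^\nu} \le 2\cdot 4^k \sum_p \sum_{\nu \ge 2} p^{-\nu} \le 4^{k+1}\sum_p \frac{1}{p(p-1)} < \infty.\]
The only care needed is the boundary term $\nu=2$, which involves $g_j(p)$, but that is covered by the same uniform bound. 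So this step is in fact routine once the uniform boundedness of $f_1, f_2$ at prime powers is extracted from \eqref{sigma} and \eqref{S_sigma}.

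The main subtlety is not any estimate but justifying the reduction itself. Wintner's theorem is applied to each $g_j$ separately, and it yields existence of each limit $M(g_j)$. Linearity of limits then gives existence of $\mu_k$ as the finite sum $\sum_j \binom{k}{j}(-1)^{k-j}M(g_j)$. No cancellation issue arises, because we only need existence and finiteness, not positivity of the individual terms.
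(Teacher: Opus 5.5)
Your proposal is correct and follows the paper's proof: you expand $(S_s(n)/n)^k$ binomially into the multiplicative functions $(S_\sigma(n)/n)^j(\sigma(n)/n)^{k-j}$ and apply Wintner's Mean Value Theorem to each one. Your uniform bound $g_j(p^\nu)\le 4^k$ checks condition (ii) more simply than the paper's expansion of $h_{k,j}(p^\nu)$, and it suffices because $\sum_p\sum_{\nu\ge 2}p^{-\nu}<\infty$. There is one slip: $S_\sigma(p)=\sigma(1)+\sigma(p)=p+2$, so $f_1(p)=1+2/p$ rather than $1+3/p$, but this does not affect $g_j(p)=1+O_k(1/p)$.
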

\begin{proof}
First, the Binomial Theorem yields
\begin{align*}
    (S_s(i)/i)^k &= \frac{(S_{\sigma}(i) - \sigma(i))^k}{i^k}\\
    &= \frac{1}{i^k} \sum_{j=0}^k \binom{k}{j} (-1)^j (\sigma(i))^j (S_{\sigma}(i)^{k-j}).
\end{align*}
Each of the functions $h_{k,j}(i) = (\sigma(i))^j(S_{\sigma}(i))^{k-j}/i^k$ is multiplicative, and below we will use Wintner's Mean Value Theorem to show that each has finite mean. From the existence of mean values for the $h_{k,j}$, we conclude that the moments $\mu_k$ exist and are finite.

We first turn our attention to sum (i) in Theorem \ref{thm:wintner}. Since $n \leq \sigma(n) \leq S_{\sigma}(n)$ for all $n$, we have that $0 \leq h_{k,j}(p) - 1 \leq h_{k,0}(p) - 1$, and so it suffices to check that sum (i) converges for $g = h_{k,0}$. Using expression \eqref{S_sigma}, we get
\begin{align*}
    h_{k,0}(p) - 1 &\leq \fracpar{S_{\sigma}(p)}{p}^k - 1\\
    &< \paren{1+\frac{1}{p-1}}^{2k} - 1\\
    &= \frac{p^{2k} - (p-1)^{2k}}{(p-1)^{2k}}\\
    &= \frac{p^{2k} - (p^{2k} - 2k p^{2k-1} + \text{terms of lower degree})}{(p-1)^{2k}}\\
    &\ll_k \frac{p^{2k-1}}{(p-1)^{2k}}\\
    &\ll_k \frac{1}{p}.
\end{align*}
Thus, for $g = h_{k,0}$, the summands in (i) are $O(1/p^2)$, so the sum converges.

For the double sum (ii), we fix $k,j$ and use expressions \eqref{sigma} and \eqref{S_sigma}  to estimate
\begin{align*}
    h_{k,j}(p^{\nu}) &= \fracpar{\sigma(p^{\nu})}{p^{\nu}}^j \fracpar{S_{\sigma}(p^{\nu})}{p^{\nu}}^{k-j}\\
    &= \paren{\paren{1 + \frac{1}{p-1}}+ O\fracpar{1}{p^{\nu+1}}}^j \paren{\paren{1+\frac{1}{p-1}}^2 + O\fracpar{\nu}{p^{\nu+1}}}^{k-j}\\
    &= \paren{\paren{1 + \frac{1}{p-1}}^j + O_{k,j}\fracpar{1}{p^{\nu+1}}} \paren{\paren{1 + \frac{1}{p-1}}^{2k-2j} + O_{k,j}\fracpar{\nu}{p^{\nu+1}}}\\
    &= \paren{1+\frac{1}{p-1}}^{2k-j} + O_{k,j}\fracpar{\nu}{p^{\nu+1}}.
\end{align*}
Thus, the numerator of the inner sum (ii) is $\abs{h_{k,j}(p^\nu) - h_{k,j}(p^{\nu-1})} = O(\nu p^{-(\nu+1)})$. Therefore, the terms of the inner sum are $O(\nu p^{-(2\nu + 1)})$. We can evaluate the series $S = \sum_{\nu=2}^{\infty} \frac{\nu}{p^{2\nu + 1}}$ by using the geometric series $G = \sum_{\nu = 2}^{\infty} x^{(2\nu + 2)}$. We have $G = x^6/(1-x^2)$, so taking the derivative of both sides with respect to $x$ yields
\begin{align*}
    \frac{6x^5 - 4x^7}{(1-x^2)^2} &= \frac{d}{dx} \sum_{\nu=2}^{\infty} x^{2\nu+2}\\
    &= \sum_{\nu=2}^{\infty} (2\nu+2) x^{2\nu + 1}\\
    &= 2\paren{\sum_{\nu = 2}^{\infty} \nu x^{2\nu+1} + \sum_{\nu=2}^{\infty} x^{2\nu+1}}.
\end{align*}
Notice that the first term inside the parentheses becomes $S$ when evaluated at $x = 1/p$, and the second term is geometric. Rearranging and solving for $S$ gives us
\[S = \frac{2p^2 - 1}{(p^2-1)^2 p^3}.\]
So, we conclude that the inner sum converges to a value that is $O(p^{-5})$. Therefore, the double sum converges. Having checked that the hypotheses of Wintner's Mean Value Theorem hold, we conclude that each $h_{k,j}$ has a finite mean value. \end{proof}


\section{Asymptotic growth of the moments}\label{sec:momentasymptotic}


In this section, we determine the asymptotic growth of $\mu_k$. 

\begin{theorem} \label{thm:momentasymp}
The moments $\mu_k$ satisfy
$$\mu_k^{1/k} \sim e^{2\gamma}(\log k)^2$$ as $k \rightarrow \infty,$ where $\gamma$ is the Euler-Mascheroni constant.
\end{theorem}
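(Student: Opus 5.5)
We will sandwich $S_s(n)/n$ between two multiplicative functions whose moments are both governed by $(n/\varphi(n))^2$. The upper bound is immediate: $S_s(n)/n \le S_\sigma(n)/n = \sum_{d\mid n}\sigma(d)/n$. At a prime power, \eqref{S_sigma} gives $S_\sigma(p^\nu)/p^\nu \le (1-1/p)^{-2}$, so $S_\sigma(n)/n \le (n/\varphi(n))^2$. For the lower bound, note that $\sigma(n)/n \le n/\varphi(n)$, so
\[\frac{S_s(n)}{n} = \frac{S_\sigma(n)}{n} - \frac{\sigma(n)}{n} \ge \frac{S_\sigma(n)}{n} - \frac{n}{\varphi(n)}.\]
The subtracted term is only the square root of the main term when $n/\varphi(n)$ is large, and large $n/\varphi(n)$ is exactly where the high moments are concentrated. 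Hence
\[\mu_k^{1/k} = (1+o(1))\, M\big((S_\sigma(n)/n)^k\big)^{1/k}\]
should hold, and the task reduces to estimating the mean of $(S_\sigma/n)^k$.

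\textbf{Upper bound.} We have $\mu_k \le M\big((n/\varphi(n))^{2k}\big)$. The mean of the multiplicative function $(n/\varphi(n))^{2k}$ is the Euler product
\[\prod_p\Big(1 + \frac{(1-1/p)^{-2k}-1}{p}\Big).\]
We estimate it in the standard way, as in the known result $M((n/\varphi(n))^{k})^{1/k}\sim e^\gamma\log k$. Primes $p \le k\log^2 k$ (roughly) contribute about $\prod_{p\le y}(1-1/p)^{-2k}\cdot\prod_{p\le y} p^{-1}$. By Mertens this is about $(e^\gamma\log y)^{2k}e^{-\theta(y)}$, and $e^{-\theta(y)}$ is negligible after taking $k$th roots only if $y = o(k\log k)$. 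Larger primes contribute a factor $\exp(O(k/\log k))$ or similar. Choosing $y = k/\log k$ or so balances these, giving $\limsup \mu_k^{1/k}/(\log k)^2 \le e^{2\gamma}$. Equivalently, one can bound $\sum_p \log(1+((1-1/p)^{-2k}-1)/p)$ by splitting at $p\approx k$: small primes give $2k\log(e^\gamma\log k)+o(k\log\log k)$, and $p>k$ give $\sum_{p>k}\frac{e^{2k/p}-1}{p}=O(k/\log k)$.

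\textbf{Lower bound.} Let $P_y=\prod_{p\le y}p$ with $y=k/(\log k)^2$ (any $y$ with $\log y\sim\log k$ and $\theta(y)=o(k\log\log k)$ works). For multiples $n$ of $P_y$, the squarefree part gives $S_\sigma(n)/n \ge S_\sigma(P_y)/P_y = \prod_{p\le y}(1+2/p)$. This follows since $S_\sigma$ over $n$ is multiplicative and at least $1$ on each factor, and $S_\sigma(p)/p = (p+2)/p$. Meanwhile $\sigma(P_y)/P_y=\prod_{p\le y}(1+1/p)$. Monotonicity in $n$ is not clear for the difference, but we bound the difference on $n=P_y m$ with $m$ coprime to $P_y$ using \eqref{eq:S_striple}-type arguments, or more simply we restrict to $n = P_y m$ with $m$ free of primes $\le y$ and use the exact formula $S_s(P_y m)=S_s(P_y)S_\sigma(m)+\sigma(P_y)S_s(m)\ge S_s(P_y)\,m$. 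Then
\[\frac{S_s(n)}{n}\ge \prod_{p\le y}\Big(1+\frac 2p\Big)-\prod_{p\le y}\Big(1+\frac1p\Big) = (1+o(1))\,c\,(\log y)^2,\]
with $\prod(1+2/p)\sim C e^{2\gamma}(\log y)^2$ up to a constant $C=\prod_p (1+2/p)(1-1/p)^2$. The set of such $n$ has density at least $\frac{1}{P_y}\prod_{p\le y}(1-1/p)\ge e^{-2\theta(y)}$. Therefore
\[\mu_k^{1/k}\ge (1+o(1))\,C e^{2\gamma}(\log y)^2\, e^{-2\theta(y)/k} = (1+o(1))\,C e^{2\gamma}(\log k)^2.\]

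\textbf{Main obstacle.} The constant $C=\prod_p(1+2/p)(1-1/p)^2<1$ is a problem, since the squarefree choice loses this factor. To remove it, I would instead take $n=\prod_{p\le y}p^{a_p}\cdot m$ with exponents $a_p$ large, for example $p^{a_p}>k^2$. Then $S_\sigma(p^a)/p^a\to(1-1/p)^{-2}$ by \eqref{S_sigma}, and the $\nu$-dependent error $(\nu+2)/((p-1)p^\nu)$ is negligible. The product then becomes $(1+o(1))e^{2\gamma}(\log y)^2$. The density drops to $\prod_{p\le y} p^{-a_p}$, roughly $e^{-O(\pi(y)\log k)}$, which is still $e^{o(k\log\log k)}$ when $y=k/(\log k)^2$. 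The subtracted $\sigma(n)/n\le e^\gamma\log y(1+o(1))$ remains lower order. Checking that these parameter choices simultaneously make every error $o(1)$ after $k$th roots is the delicate step. Combined with the upper bound, this gives $\mu_k^{1/k}\sim e^{2\gamma}(\log k)^2$.
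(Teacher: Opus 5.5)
Your overall strategy matches the paper's. The upper bound comes from $S_s(n)/n\le S_\sigma(n)/n\le (n/\varphi(n))^2$ together with the Euler product for the $2k$-th moment of $n/\varphi(n)$. The lower bound comes from fixing large prime powers $p^{a_p}$ at every $p\le y$ and thickening to the set of $n=Mm$ with $m$ free of primes $\le y$. Your monotonicity step $S_s(Mm)=S_s(M)S_\sigma(m)+\sigma(M)S_s(m)\ge S_s(M)\,m$ is a tidy equivalent of the paper's factorization $A(m)A(r)(1-C(m)C(r))\ge A(m)(1-C(m))$. Your parameters $y=k/(\log k)^2$ and $p^{a_p}>k^2$ play the role of the paper's $y=k/(\log k)^3$ and $p^{\nu_p}\ge k^3$.

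What needs correcting is your negligibility criterion, which is wrong in several places. The asymptotic is equivalent to $\frac{1}{k}\log\mu_k=2\gamma+2\log\log k+o(1)$, so every error in $\log\mu_k$ must be $o(k)$. You repeatedly allow errors of size $o(k\log\log k)$, which only give $\mu_k^{1/k}=e^{2\gamma}(\log k)^2e^{o(\log\log k)}$, and $e^{o(\log\log k)}$ need not tend to $1$.

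\begin{itemize}
\item Your parenthetical claim that any $y$ with $\log y\sim\log k$ and $\theta(y)=o(k\log\log k)$ works is false. For $y=k\sqrt{\log\log k}$ both conditions hold, yet the factor $e^{-2\theta(y)/k}$ in your lower bound tends to $0$.
\item Your condition ``$y=o(k\log k)$'' should read $y=o(k)$.
\item In the upper bound, the small-prime part should be recorded as $2k\log(e^{\gamma}\log k)+O(k/\log k)$, using Mertens with its $O(1/\log k)$ relative error. The primes $p>k$ contribute only $O(1)$.
\end{itemize}

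Fortunately, your concrete choices do meet the correct criterion:
\begin{itemize}
\item Since $a_p\log p\le 2\log k+\log p$, you get $\log M\le 2\pi(y)\log k+\theta(y)=O(k/(\log k)^2)=o(k)$. Hence the thickened set has density whose $k$-th root is $1+o(1)$.
\item The $\nu$-dependent corrections in the formula for $S_\sigma(p^{a_p})/p^{a_p}$ sum to $o(1)$ over $p\le y$. This gives $S_s(M)/M\sim e^{2\gamma}(\log k)^2$.
\item It is $\sigma(M)/M$, not $\sigma(n)/n$, that must be lower order. The latter is unbounded on your thickened set, but your monotonicity step means only $M$ matters.
\end{itemize}
With the bookkeeping done at precision $o(k)$, your argument is complete and is essentially the paper's proof.
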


We will require several lemmas before we proceed with the main proof.

\begin{lemma} For each positive integer $r$, define $$R(n):= \frac{n}{\varphi(n)}$$ and $$M_r:= \lim_{x \rightarrow \infty} \frac{1}{x} \sum_{n \leq x} R(n)^r.$$ Then the mean value $M_r$ exists and, as $r \rightarrow \infty,$ $$M_r^{1/r} \sim e^\gamma \log r,$$ where $\gamma$ denotes the Euler-Mascheroni constant.
\end{lemma}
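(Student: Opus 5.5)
Existence of $M_r$ follows from Wintner's Mean Value Theorem (Theorem \ref{thm:wintner}) applied to the multiplicative function $g = R^r$. Here $R(p^\nu) = p/(p-1)$ for every $\nu \ge 1$, so $g(p^\nu) - g(p^{\nu-1}) = 0$ for $\nu \ge 2$ and sum (ii) vanishes. Also $g(p) - 1 = (1+\frac{1}{p-1})^r - 1 \ll_r 1/p$, so sum (i) converges. Moreover, the mean value then equals the Euler product
\[M_r = \prod_p \Bigl(1 - \frac1p + \frac1p\Bigl(\frac{p}{p-1}\Bigr)^r\Bigr),\]
which I would justify by the standard convolution argument: $g = 1 * h$ with $h$ supported on squarefree $n$, $h(p) = (p/(p-1))^r - 1$, and $\sum |h(n)|/n < \infty$.

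For the asymptotic, I would split the Euler product at a parameter $y$ chosen near $r$, writing each factor as $E_p(r) := 1 - \frac1p + \frac1p(1-\frac1p)^{-r}$.

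\emph{Upper bound.} For $p \le y$ I would use $E_p(r) \le (1-\frac1p)^{-r}$. Mertens' theorem then gives
\[\prod_{p\le y}\Bigl(1-\frac1p\Bigr)^{-r} = \bigl(e^\gamma \log y\,(1+o(1))\bigr)^r.\]
For $p > y$ with $y = r\log r$, say, I would write $(1-1/p)^{-r} - 1 \le e^{r/(p-1)} - 1 \ll r/p$ when $r/p \le 1$. Then
\[\prod_{p>y}E_p(r) \le \exp\Bigl(\sum_{p>y} O\bigl(r/p^2\bigr)\Bigr) = \exp\bigl(O(r/(y\log y))\bigr),\]
whose $r$-th root tends to $1$. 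Hence $M_r^{1/r} \le e^\gamma \log(r\log r)(1+o(1)) = e^\gamma \log r\,(1+o(1))$.

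\emph{Lower bound.} Every factor is at least $1$, so I can drop $p > y$. For $p \le y$ I would use $E_p(r) \ge \frac1p(1-\frac1p)^{-r}$. This gives
\[M_r^{1/r} \ge \Bigl(\prod_{p\le y}\frac1p\Bigr)^{1/r}\prod_{p\le y}\Bigl(1-\frac1p\Bigr)^{-1} = e^{-\theta(y)/r}\, e^\gamma \log y\,(1+o(1)).\]
Choosing $y = r/\log r$, Chebyshev's bound gives $\theta(y) \ll y = o(r)$, so the first factor tends to $1$. Since $\log y \sim \log r$, the lower bound follows.

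The main obstacle is balancing the cutoff $y$. It must be small enough that the $1/p$ loss in the lower bound, namely $\theta(y)/r$, is negligible, yet large enough that the tail in the upper bound is harmless. Using two different cutoffs, $r/\log r$ for the lower bound and $r\log r$ for the upper bound, resolves this, since both have logarithm asymptotic to $\log r$. In the upper bound one must also check uniformity in the tail estimate, which is fine since $r/(p-1) \le 2/\log r$ there.
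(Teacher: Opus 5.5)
Your proposal is correct and follows essentially the same route as the paper. Both arguments use the Euler product for $M_r$, bound small-prime factors by $E_p(r)\le(1-1/p)^{-r}$ with an $O(r/p^2)$ tail for the upper bound, and use $E_p(r)\ge \frac1p(1-1/p)^{-r}$ with $\theta(y)=o(r)$ and Mertens for the lower bound. The only differences are cosmetic: you derive the Euler product from Wintner's theorem and the convolution $g=1*h$ instead of citing Martin--Pollack--Smith, and you use cutoffs $r\log r$ and $r/\log r$ where the paper uses $r$ and $r/(\log r)^2$.
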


\begin{proof}

    The mean value $M_r$ exists and is given by the Euler product $$M_r = \prod_p \left(1 + \frac{1}{p} \left(\left(1 - \frac{1}{p}\right)^{-r} -1\right)\right),$$ as recorded in Martin-Pollack-Smith \cite[see the proof of Proposition 4.3]{MPS}

    For the upper bound, we split the Euler product at $p = r.$ If $p \leq r$, we have $$1 + \frac{(1 - 1/p)^{-r} - 1}{p} \leq (1 - 1/p)^{-r}.$$ For $p > r$, $$(1 - 1/p)^{-r} - 1 \ll \frac{r}{p},$$ so $$\sum_{p > r} \log \left(1 + \frac{(1 - 1/p)^{-r} - 1}{p}\right) \ll r \sum_{p >r} \frac{1}{p^2} = O(1).$$ Thus, Mertens' Theorem gives us $$M_r^{1/r} \leq (1 + o(1))\prod_{p \leq r}\left(1 - \frac{1}{p}\right)^{-1} \sim e^\gamma \log r.$$ 

    We will define a parameter $z$ which depends on $r$, namely $$z = \frac{r}{(\log r)^2}.$$ Since every Euler factor is at least $1$, we can disregard primes $p > z.$ Moreover, each Euler factor satisfies $$1 + \frac{(1 - 1/p)^{-r} - 1}{p} \geq \frac{1}{p}(1 - 1/p)^{-r}.$$ As a result, we have $$M_r^{1/r} \geq \prod_{p \leq z} (1 - 1/p)^{-1} \exp\left(-\frac{1}{r} \sum_{p \leq z} \log p\right).$$ Observe that $$\sum_{p \leq z} \log p = O(z) = o(r),$$ and by Mertens' Theorem $$\prod_{p \leq z} (1 - 1/p)^{-1} \sim e^\gamma \log z \sim e^\gamma \log r.$$ Therefore, we have \begin{align}\label{eq:Mrasymp} M_r^{1/r} \sim e^\gamma \log r.\end{align} 
\end{proof}

\begin{lemma}\label{lem:limsup} For each positive integer $k$, let $$\mu_k := \lim_{x \rightarrow \infty} \frac{1}{x} \sum_{n \leq x} \left(\frac{S_s(n)}{n}\right)^k,$$ whose existence is guaranteed by Theorem \ref{thm:moment est}. Then $$\limsup_{k \rightarrow \infty} \frac{\mu_k^{1/k}}{(\log k)^2} \leq e^{2\gamma}.$$
\end{lemma}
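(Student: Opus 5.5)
Our plan is to dominate $S_s(n)/n$ pointwise by the square of $R(n)=n/\varphi(n)$ and then invoke the preceding lemma with $r=2k$. Since $0\le S_s(n)\le S_\sigma(n)$, it suffices to bound $S_\sigma(n)/n$, which is multiplicative. For a prime power $p^\nu$, expression \eqref{S_sigma} gives
\[
\frac{S_\sigma(p^\nu)}{p^\nu}\le \Bigl(1+\frac{1}{p-1}\Bigr)^2=\Bigl(1-\frac1p\Bigr)^{-2}=R(p^\nu)^2 ,
\]
because the subtracted terms in \eqref{S_sigma} are nonnegative. Multiplying over the prime powers exactly dividing $n$ yields $S_\sigma(n)/n\le R(n)^2$. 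Hence
\[
0\le \frac{S_s(n)}{n}\le R(n)^2 \qquad\text{for all } n .
\]

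Raising this to the $k$th power and averaging over $n\le x$ gives
\[
\frac1x\sum_{n\le x}\Bigl(\frac{S_s(n)}{n}\Bigr)^k\le \frac1x\sum_{n\le x}R(n)^{2k}.
\]
Both limits exist: the left one by Theorem \ref{thm:moment est}, the right one by the preceding lemma. Letting $x\to\infty$ therefore gives $\mu_k\le M_{2k}$.

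It remains to apply the preceding lemma with $r=2k$, which gives $M_{2k}^{1/(2k)}\sim e^{\gamma}\log(2k)$. Consequently
\[
\mu_k^{1/k}\le M_{2k}^{1/k}=\bigl(M_{2k}^{1/(2k)}\bigr)^2=(1+o(1))\,e^{2\gamma}(\log 2k)^2 .
\]
Since $\log 2k\sim\log k$, dividing by $(\log k)^2$ and taking $\limsup$ yields the stated bound $e^{2\gamma}$.

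The only step that needs care is the prime-power inequality $S_\sigma(p^\nu)\le p^\nu(1-1/p)^{-2}$. This is immediate from \eqref{S_sigma} once the subtracted terms $\frac{\nu+2}{p-1}+\frac{1}{(p-1)^2}$ are seen to be nonnegative, so no real obstacle is expected. All the analytic content is already contained in the preceding lemma on $M_r$.
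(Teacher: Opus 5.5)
Your proposal is correct and follows the paper's proof essentially step for step. You bound $S_\sigma(p^\nu)/p^\nu$ by $(1-1/p)^{-2}$ at prime powers, deduce the pointwise domination $0 \le S_s(n)/n \le R(n)^2$, average to get $\mu_k \le M_{2k}$, and then apply the preceding lemma with $r=2k$, exactly as the paper does.
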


\begin{proof}
Write $A(n) = \frac{S_\sigma(n)}{n}$ and $B(n) = \frac{\sigma(n)}{n}.$ Then $\frac{S_s(n)}{n} = A(n) - B(n).$ By \eqref{S_sigma}, we have $$\frac{S_\sigma(p^\nu)}{p^\nu} \leq \left(1 + \frac{1}{p-1}\right)^2 = \left( \frac{p^\nu}{\varphi(p^\nu)}\right)^2.$$ Thus, we have $$0 \leq \frac{S_s(n)}{n} \leq A(n) \leq R(n)^2 = \left(\frac{n}{\varphi(n)}\right)^2.$$ After taking $k^{th}$ powers and averaging over $n \leq x$, we obtain $$\mu_k \leq M_{2k}.$$ Taking $k^{th}$ roots yields $$\mu_k^{1/k} \leq M_{2k}^{1/k} = \left(M_{2k}^{1/(2k)}\right)^2.$$ Using the asymptotic \eqref{eq:Mrasymp} that we obtained above, we have $$M_{2k}^{1/(2k)} \sim e^\gamma \log(2k) \sim e^\gamma \log k.$$ Therefore, we obtain $$\limsup_{k \rightarrow \infty} \frac{\mu_k^{1/k}}{(\log k)^2} \leq e^{2\gamma}$$ as our upper bound.
\end{proof}

\begin{lemma}\label{lem:liminf} With $\mu_k$ defined as above, we have $$\liminf_{k \rightarrow \infty} \frac{\mu_k^{1/k}}{(\log k)^2} \geq e^{2\gamma}.$$
\end{lemma}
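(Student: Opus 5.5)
The lower bound will come from restricting the average to the multiples of a primorial. Take a parameter $z=z(k)\to\infty$ to be chosen later, and set $P=\prod_{p\le z}p$. By Theorem \ref{thm:moment est}, $\mu_k$ exists, and every term is nonnegative. Hence
\[\mu_k \geq \liminf_{x\to\infty}\frac1x\sum_{\substack{n\le x\\ P\mid n}}\Bigl(\frac{S_s(n)}{n}\Bigr)^k \geq \frac{1}{P}\,\inf_{P\mid n}\Bigl(\frac{S_s(n)}{n}\Bigr)^k .\]
It therefore suffices to bound $S_s(n)/n$ from below, uniformly over multiples $n$ of $P$, by something of size close to $(e^{\gamma}\log z)^2$.

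\emph{Pointwise lower bound.} Write $S_s(n)/n=A(n)-B(n)$, with $A$ and $B$ as in the proof of Lemma \ref{lem:limsup}. The key input is that $A(n)=\sum_{d\mid n}\sigma(d)/n=\sum_{d\mid n}\frac{\sigma(d)}{d}\cdot\frac1{n/d}$ is a Dirichlet convolution of $\sigma(d)/d$ and $1/d$. In particular $A$ is multiplicative with $A(p)=(1+1/p)+1/p\ge 1+2/p$, and $A(p^\nu)\ge A(p)$ since $A(p^\nu)=\sum_{i\le\nu}\sigma(p^i)/p^i\cdot p^{i-\nu}$ is nondecreasing in $\nu$ (easily checked from \eqref{S_sigma}). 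Hence for $P\mid n$,
\[A(n)\ge\prod_{p\le z}\Bigl(1+\frac2p\Bigr).\]
Since $\prod_{p\le z}(1+2/p)(1-1/p)^2=\prod_{p\le z}\bigl(1-3/p^2+2/p^3\bigr)$ converges to a positive constant $c$, Mertens' theorem gives $A(n)\ge (c+o(1))\,e^{2\gamma}(\log z)^2$.

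The constant $c<1$ is the obstacle. The crude $\sqrt[k]{\cdot}$ of a constant is harmless, but $c$ multiplies the base of the $k$th power, so it does not disappear after taking $k$th roots. I would fix this by demanding high prime powers: require $p^{m}\mid n$ for all $p\le z$, with $m=m(k)\to\infty$. Then \eqref{S_sigma} gives $A(p^m)=(1-1/p)^{-2}+O(m/p^{m})$, so $\prod_{p\le z}A(p^m)\ge(1-o(1))\prod_{p\le z}(1-1/p)^{-2}$ once, say, $m\ge 2$ grows (the error $\sum_p m p^{-m}\to 0$). The price is replacing $P$ by $P^m$ in the density.

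\emph{The $B$ term.} Meanwhile $B(n)=\sigma(n)/n\le n/\varphi(n)$, and it is still necessary to control $B(n)$ from above for all such $n$, which is not uniform. I avoid that by restricting further to $n=P^m t$ with $t$ having no prime factor $\le z$. For such $n$, $A(n)=A(P^m)A(t)\ge A(P^m)$, and I bound $B$ relative to $A$: for each prime power, $B(p^\nu)\le (1-1/p)^{-1}$ while $A(p^\nu)\approx(1-1/p)^{-2}$. Then $B(n)/A(n)\le\prod_{p\mid n}(1-1/p)\cdot(1+o(1))\le \prod_{p\le z}(1-1/p)(1+o(1))\to0$, so that $S_s(n)/n=A(n)(1-B/A)\ge(1-o(1))A(P^m)$. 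This holds for all $n$ divisible by $P^m$, not just those with $t$ free of small primes. The needed bounds are $A(p^\nu)\ge B(p^\nu)\,(1-1/p)^{-1}(1-o(1))$ for $\nu\ge m$, and $A(q^\nu)\ge B(q^\nu)$ for other primes $q$; both follow from \eqref{sigma} and \eqref{S_sigma}.

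\emph{Choosing $z$ and $m$.} Combining, $\mu_k^{1/k}\ge P^{-m/k}(1-o(1))e^{2\gamma}(\log z)^2$. By the prime number theorem $\log P^m=m\theta(z)\sim mz$, so I take $z=k/(\log k)^2$ and $m=\lfloor\log k\rfloor$. Then $mz/k\to0$, so $P^{-m/k}\to1$, while $\log z\sim\log k$. This yields $\liminf\mu_k^{1/k}/(\log k)^2\ge e^{2\gamma}$. The main thing to check carefully is the uniform per-prime-power inequality comparing $A$ and $B$, in particular for primes $q\mid t$ with $q>z$ raised to small exponents, where only $A(q^\nu)\ge B(q^\nu)$ is available.
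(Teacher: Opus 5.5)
Your proposal is correct and follows essentially the same route as the paper. Both arguments build a modulus from high powers of all primes up to $z\approx k/(\log k)^{2+o(1)}$. This makes $A$ of that part equal to $(1+o(1))\prod_{p\le z}(1-1/p)^{-2}$ and forces $B/A\to 0$, and then $A\ge 1$ and $B/A\le 1$ on the remaining prime powers give the bound on a set of density $e^{-o(k)}$.

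The differences are cosmetic. You use a uniform exponent $m=\lfloor\log k\rfloor$ and take all multiples of $P^m$, which needs your (correct) monotonicity of $A(p^\nu)$ in $\nu$. The paper instead uses $p^{\nu_p}\ge k^3$ with $y=k/(\log k)^3$ and restricts to the exact-exponent set $\mathcal{E}_k$.
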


\begin{proof}
For the lower bound, let $$y = \frac{k}{(\log k)^3}.$$ For every prime $p \leq y$, define $$\nu_p := \left\lceil \frac{3 \log k}{\log p}\right\rceil,$$ so that $p^{\nu_p} \geq k^3$ and define $$m = m_k = \prod_{p \leq y} p^{\nu_p}.$$ Recall from \eqref{S_sigma} that \begin{align*} \frac{S_{\sigma}(p^{\nu_p})}{p^{\nu_p}} &= \left(1+\frac{1}{p-1}\right)^2 - \frac{\nu_p+1}{p^{\nu_p}(p-1)} - \frac{1}{p^{\nu_p-1}(p-1)^2}.\end{align*} Then we can write $$A(p^{\nu_p}) = (1 - 1/p)^{-2}(1 - \delta_p),$$ where $$\delta_p = \frac{(\nu_p + 1)(p-1)}{p^{\nu_p+2}} + \frac{1}{p^{\nu_p + 1}} \ll \frac{\nu_p + 1}{p^{\nu_p + 1}}.$$ Since $p^{\nu_p} \geq k^3,$ we have $$\sum_{p \leq y} \delta_p \ll \frac{\pi(y) \log k}{k^3} = o(1).$$ Hence, $$\prod_{p\leq y} (1 - \delta_p) = 1 + o(1).$$ Thus, we have $$A(m) \sim \prod_{p \leq y}(1 - 1/p)^{-2} \sim e^{2\gamma}(\log y)^2 \sim e^{2\gamma}(\log k)^2.$$ At the same time, we have $$B(m) \leq \prod_{p \leq y}(1-1/p)^{-1},$$ so that $$\frac{B(m)}{A(m)} \ll \frac{1}{\log y} = o(1).$$ Therefore, we can conclude that $$\frac{S_s(m)}{m} = A(m) - B(m) \sim e^{2\gamma}(\log k)^2.$$

Since a single integer $m$ contributes zero density, we now need to ``thicken'' the set. Let $\nu_p(n)$ denote the exponent of $p$ in the prime factorization of $n$, and consider the set $$\mathcal{E}_k = \{n: \nu_p(n) = \nu_p \ \mathrm{for \ every} \ p \leq y\}.$$ This set has natural density $$d(\mathcal{E}_k) = \prod_{p \leq y} \frac{1 - 1/p}{p^{\nu_p}} = \frac{1}{m} \prod_{p \leq y}(1 - 1/p).$$ Observe that $$\log m = \sum_{p \leq y} \nu_p \log p \ll \pi(y) \log k + y \ll \frac{k}{(\log k)^2}= o(k).$$ Then we have \begin{align*} \log d(\mathcal{E}_k) &= -\log m + \sum_{p \leq y} \log(1 - 1/p) \\ &= -\log m - \log \log y + O(1) \\ &= o(k).\end{align*} Thus, we obtain $$d(\mathcal{E}_k)^{1/k} = \exp \left(\frac{\log d(\mathcal{E}_k)}{k}\right) = 1 + o(1).$$ 

If $n \in \mathcal{E}_k$, write $n = mr,$ where all of the prime factors of $r$ are greater than $y.$ Since $A$ and $B$ are multiplicative, then $$C(n) := \frac{B(n)}{A(n)}$$ is also multiplicative. Note that $A(r) \geq 1$ and $0 < C(r) \leq 1.$ Therefore, we have \begin{align*}\frac{S_s(n)}{n} &= A(n)(1 - C(n))\ \\ &=A(m)A(r)(1 - C(m)C(r)) \\ &\geq A(m)(1 - C(m)) \\ &= \frac{S_s(m)}{m}.\end{align*}

Thus, we have $$\mu_k \geq d(\mathcal{E}_k)\left(\frac{S_s(m)}{m}\right)^k.$$ After taking $k^{th}$ roots, we obtain: $$\mu_k^{1/k} \geq d(\mathcal{E}_k)^{1/k} \frac{S_s(m)}{m} \sim e^{2 \gamma} (\log k)^2.$$ 


\end{proof}

\begin{proof}[Proof of Theorem \ref{thm:momentasymp}]
By Lemmas \ref{lem:limsup} and \ref{lem:liminf}, the $\limsup$ and $\liminf$ coincide, and $$\mu_k^{1/k} \sim e^{2\gamma}(\log k)^2.$$
    
\end{proof}

By taking the logarithm of both sides of the asymptotic expression, we obtain the following corollary:

\begin{cor}\label{cor:logmu_k} As $k \rightarrow \infty$, we have $$\log \mu_k = 2k \log \log k + 2 \gamma k + o(k).$$
\end{cor}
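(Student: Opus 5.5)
The plan is to take logarithms in Theorem \ref{thm:momentasymp} and expand. The theorem says $\mu_k^{1/k} = e^{2\gamma}(\log k)^2(1+\epsilon_k)$ with $\epsilon_k \to 0$ as $k\to\infty$. Since $\mu_k^{1/k}>0$ for all $k$ (the moments are positive because $S_s(n)/n>0$ on a set of positive density, e.g.\ all $n>1$), we may take logarithms and multiply by $k$:
\[
\log \mu_k = k\log\bigl(\mu_k^{1/k}\bigr) = k\bigl(2\gamma + 2\log\log k + \log(1+\epsilon_k)\bigr).
\]

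The only remaining point is that $k\log(1+\epsilon_k) = o(k)$. This holds because $\log(1+\epsilon_k)\to 0$ as $\epsilon_k\to 0$, by continuity of $\log$ at $1$. Collecting terms then gives
\[
\log\mu_k = 2k\log\log k + 2\gamma k + o(k).
\]

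There is no real obstacle here. The one thing to note is that the error term is not $o(k\log\log k)$ but $o(k)$, which is what is claimed. This is exactly the multiplicative $(1+o(1))$ error converted additively, so nothing sharper than Theorem \ref{thm:momentasymp} (that is, Lemmas \ref{lem:limsup} and \ref{lem:liminf}) is needed.
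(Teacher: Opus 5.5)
Your proposal is correct and matches the paper, which likewise just takes logarithms in Theorem \ref{thm:momentasymp} and absorbs the $(1+o(1))$ factor into an additive $o(k)$.

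One small quibble: $S_s(n)/n>0$ on a set of positive density does not by itself force $\mu_k>0$ (the function $1/n$ is positive everywhere yet has mean zero). You don't need that argument, though. The asymptotic $\mu_k^{1/k}\sim e^{2\gamma}(\log k)^2$ already makes $\mu_k>0$ for all large $k$, and that is all an asymptotic statement requires.
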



\section*{Acknowledgements} This project grew out of an honors thesis that the first author completed as an undergraduate student at Oberlin College, while working under the direction of the second author. The authors would like to thank Oberlin College for providing them with the opportunity to work together. In addition, they would like to thank Paul Pollack for helpful comments on an early draft of this manuscript, and Alisa Sedunova for suggesting the problem that they solve in Section \ref{sec:momentasymptotic}. The late stages in the preparation of this manuscript took place while the second author was on sabbatical at the Max Planck Institute for Mathematics and the Centre de Recherches Mathématiques. She would like to thank both institutions for providing her with a pleasant working environment. 

\bibliography{bibliography}
\bibliographystyle{plain}
\end{document}